\documentclass[fleqn,suppldata]{interact}

\usepackage{epstopdf}
\usepackage{subfigure}
\usepackage{amsmath,amssymb,amsfonts}
\usepackage{enumitem}
\usepackage{caption}
\usepackage[numbers,sort&compress,merge]{natbib}
\bibpunct[, ]{[}{]}{,}{n}{,}{,}
\theoremstyle{plain}
\newtheorem{theorem}{Theorem}[section]
\newtheorem{lemma}[theorem]{Lemma}
\newtheorem{corollary}[theorem]{Corollary}
\newtheorem{proposition}[theorem]{Proposition}

\theoremstyle{definition}

\newtheorem{example}[theorem]{Example}

\theoremstyle{remark}

\newcommand{\Complex}{\mathbb{C}}

\newcommand{\la}{{\lambda}}

\begin{document}

\articletype{RESEARCH ARTICLE}

\title{Improved Inclusion-Exclusion Eigenvalue Localization Sets for Matrices and Matrix Polynomials} 

\author{%
	\name{Ljiljana Cvetkovi\'c\textsuperscript{a}\thanks{Corresponding author. Email: lila@dmi.uns.ac.rs},
		Christina Michailidou\textsuperscript{b},
		Irena Prodanovi\'c\textsuperscript{c}}
	\affil{\textsuperscript{a}Faculty of Sciences, University of Novi Sad,
		Trg D. Obradovi\'ca 4, 21000 Novi Sad, Serbia}	
	\affil{\textsuperscript{b}Department of Mathematics, School of Applied Mathematical and Physical Sciences,
		National Technical University of Athens,
		Zografou Campus, 15780 Athens, Greece}
	\affil{\textsuperscript{c}Faculty of Technical Sciences, University of Novi Sad,
		Trg D. Obradovi\'ca 6, 21000 Novi Sad, Serbia}
}

\maketitle

\begin{abstract}
	Recent developments have shown that eigenvalue localization regions can often be significantly sharpened by identifying and excluding subsets that are guaranteed to contain no eigenvalues, leading to the concept of inclusion-exclusion localization sets.
	In this paper, we introduce a new family of inclusion-exclusion eigenvalue localization sets based on the $S-$strict diagonal dominance framework and the corresponding CKV localization regions. For an arbitrary nonempty subset  $S$ of the index set, we construct novel exclusion regions and derive the associated  $S-$SDD inclusion-exclusion localization sets. We prove that every eigenvalue of a complex matrix belongs to the proposed regions and define a refined localization set obtained by intersecting the corresponding inclusion-exclusion regions over all nonempty subsets $S$. The proposed framework unifies and extends several existing results. In particular, we show that the classical Ger\v sgorin inclusion-exclusion localization set and the Dashnic-Zusmanovich inclusion-exclusion localization set arise naturally as special cases of our construction. Consequently, the new localization set provides, in general, a sharper eigenvalue inclusion region than both of these previously known localization regions. The theory is further extended to matrix polynomials, yielding corresponding inclusion-exclusion localization results for polynomial eigenvalue problems. Several numerical examples illustrate the effectiveness of the proposed approach and demonstrate the improvement achieved over existing localization sets.
\end{abstract}

\begin{keywords}
Eigenvalue localization;  $S-$strict diagonal dominance; inclusion-exclusion set; matrix polynomials
\end{keywords}

\section{Introduction}

Eigenvalue localization sets play a central role in matrix analysis and numerical linear algebra. Besides their intrinsic theoretical interest, localization regions provide valuable information on the distribution of eigenvalues and constitute an important tool in stability analysis, iterative methods, control theory, and many other applications.

The classical starting point is the Ger\v sgorin theorem \cite{Ger}, which localizes the spectrum of a matrix inside a union of disks. Since its appearance, numerous refinements and extensions have been developed. Among the most influential are the Brauer Cassini ovals \cite{Bra}, Ostrowski-type regions \cite{Ost}, and various localization sets based on generalized diagonal dominance \cite{Vargabook}.

A different direction of research emerged from the observation that many localization sets contain substantial regions that cannot contain eigenvalues. This led to the development of exclusion regions and inclusion-exclusion localization techniques. Early contributions in this direction can be traced back to Schneider \cite{Sch}, who introduced exclusion regions for eigenvalues. Later, Melman \cite{Mel} obtained the Ger\v sgorin inclusion-exclusion set by removing suitable exclusion disks from the classical Ger\v sgorin region. More recently, Zhao and She \cite{ZS} developed exclusion regions for the Dashnic-Zusmanovich localization set, producing substantially sharper eigenvalue estimates.

A common feature of these approaches is that they start from a given inclusion set and identify subregions that are guaranteed to be free of eigenvalues. The resulting inclusion-exclusion localization sets often provide considerably tighter spectral estimates than the original localization regions.

On the other hand, Cvetkovi\'c, Kosti\'c and Varga \cite{CKV} introduced a family of localization sets based on arbitrary subsets $S$ of the index set. These sets unify several classical localization techniques and lead to the CKV localization set obtained by intersecting all corresponding $S-$localization regions. The flexibility of the subset $S$ provides a natural framework for constructing localization regions based on $S-$strict diagonal dominance and for unifying several previously known eigenvalue localization techniques.

The purpose of this paper is to combine the ideas of $S-$localization and inclusion-exclusion localization. We introduce a new family of exclusion regions associated with arbitrary subsets $S$ and construct the corresponding $S-$SDD inclusion-exclusion localization sets. The proposed framework contains, as particular limiting cases, both the Ger\v sgorin inclusion-exclusion localization set and the Dashnic-Zusmanovich inclusion-exclusion localization set.

More importantly, the new localization set is obtained as the intersection of inclusion-exclusion regions corresponding to all nonempty subsets $S$ of the index set. This leads to a localization region that is, in general, sharper than the previously known Ger\v sgorin and Dashnic-Zusmanovich inclusion-exclusion sets.

The notions of Ger\v sgorin, Brauer and Dashnic-Zusmanovich inclusion sets have been generalized to the case of matrix polynomials in \cite{MP18}, while the corresponding exclusion sets (for matrix polynomials) have been introduced and studied in \cite{MPP}. It is worth mentioning that, in the case of matrix polynomials, these sets yield  localization of the spectrum and appear to have interesting geometrical and topological properties.

The paper is organized as follows. After some preliminaries, Section 3 introduces the new $S-$SDD inclusion-exclusion localization sets and proves the corresponding eigenvalue inclusion theorem. In Section 4 we compare the proposed CKV inclusion-exclusion localization set with existing inclusion-exclusion localization sets, while  Section 5 contains some important remarks on plotting the CKV inclusion-exclusion set.  Section 6 deals with matrix polynomials, and the paper ends with Section 7 - Conclusions and the list of relevant references.

\section{Preliminaries}

Throughout the paper, we assume that  $A=[a_{ij}]\in\mathbb C^{n\times n}$ and use the following notation:

\begin{itemize}[itemsep=5pt]
	\item[---] $N:=\{1,2,\ldots,n\}$ for the set of indices
	\item[---] $ \emptyset\neq S\subseteq N$ for an arbitrary nonempty subset of indices
	\item[---] $\overline S:=N\setminus S$ for its complement
	\item[---] $r_i=r_i(A):=\displaystyle{\sum_{j\in N\setminus\{i\}}|a_{ij}|}$
	\item[---] $r_i^S=r_i^S(A):=\displaystyle{\sum_{j\in S\setminus\{i\}}|a_{ij}|}$
	$\quad$ and $\quad$ $ r_i^{\overline{S}}=r_i^{\overline{S}}(A):=\displaystyle{
		\sum_{j\in \overline S\setminus\{i\}}|a_{ij}|}$
\end{itemize}
For any $i\in N$, define the $i-$th Ger\v sgorin disk
\[
\Gamma_i(A):=
\left\{
z\in\mathbb C:\ |z-a_{ii}|\le r_i
\right\},
\]
and the Ger\v sgorin set
\begin{equation} \label{Ger} 
	\Gamma(A):=\bigcup_{i\in N}
	\Gamma_{i}(A).
\end{equation}
Also, for any $i,j\in N, i \neq j$,   define 
\[
{\cal D}_{ij}(A):=
\left\{
z\in\mathbb C:\ |z-a_{ii}|\Big(|z-a_{jj}|-r_j^i\Big)\le r_i|a_{ji}|
\right\},
\]
where $r_j^i=r_j-|a_{ji}|,$ and define the  Dashnic-Zusmanovich set 
\begin{equation} \label{DZ} 
	{\cal D}(A)= \bigcap_{i\in N} \bigcup_{j\in N\setminus\{i\}} {\cal D}_{ij}(A).
\end{equation}
Finally, for any \(i\in S\) and \(j\in \overline S\), define 
\begingroup
\setlength{\mathindent}{0pt}
\begin{align}
 &	\Gamma_i^S(A):=
	\left\{
	z\in\mathbb C:\ |z-a_{ii}|\le r_i^S
	\right\}, \notag \\[2mm]
	& V_{ij}^S(A):=
	\left\{
	z\in\mathbb C:
	\Big(|z-a_{ii}|-r_i^S\Big)
	\Big(|z-a_{jj}|-r_j^{\overline{S}}\Big)
	\le
	r_i^{\overline{S}}r_j^S
	\right\}, \notag \\[2mm]
	&	W_{ij}^S(A):=
		\Big\{
		z\in\mathbb C:
		\Big(\max\big\{0,|z-a_{ii}|-r_i^S\big\}\Big)
		\Big(\max\big\{0,|z-a_{jj}|-r_j^{\overline{S}}\big\}\Big)
		\le
		r_i^{\overline{S}}r_j^S
		\Big\}, \label{eq:Wij}
\end{align}
\endgroup
 and 
\begin{equation}   \label{Sset} 
	C^{S}(A) := \Bigg( \bigcup_{i\in S}
	\Gamma^{S}_{i}(A)  \Bigg) \bigcup \Bigg( \bigcup_{i\in S, j\in
		\overline{S}} V^{S}_{ij}(A)  \Bigg),
\end{equation}		
and
\begin{equation}  \label{CKV} 
	C(A):= \bigcap_{\emptyset\neq S\subseteq N} C^S(A).
\end{equation}		
It is well-known, see \cite{Ger, DZ, CKV}, that (\ref{Ger}), (\ref{DZ}) and (\ref{CKV}) are localization sets for the spectrum $\sigma(A)$ (the set of all  eigenvalues of  matrix $A$), standing in the following relation:
$$
\sigma(A)\subseteq C(A) \subseteq {\cal D}(A)\subseteq \Gamma(A).
$$

\subsection{A new characterization of the set $C^S(A)$} \label{sec:SSDDnewIncl}

Before we continue, we will give another representation of the set $C^S(A)$ defined by (\ref{Sset}). Namely, if we look carefully at the definition of $C^S(A)$, we can conclude that
for all $i\in S$ and $j\in \overline{S}$:
\[  \Gamma^{\overline{S}}_{j}(A)    \subseteq 	\Gamma^{S}_{i}(A)  \cup V^{S}_{ij}(A) .
\]
Indeed, if we take an arbitrary $z \in \Gamma^{\overline{S}}_{j}(A)$, it means that  $|z-a_{jj}|\le r_j^{\overline{S}}$. 
\begin{itemize}
	\item If $z \in \Gamma_i^S(A)$, the proof is completed.
	\item If  $|z-a_{ii}| > r_i^S$, then 
	$\Big(|z-a_{ii}|-r_i^S\Big)
	\Big(|z-a_{jj}|-r_j^{\overline{S}}\Big)$ is nonpositive, hence $z \in  V^{S}_{ij}(A) $.
\end{itemize}
Now, we immediately conclude that
\[C^S(A)= C^S_1(A):=
\Bigg( \bigcup_{i\in S} 	\Gamma^{S}_{i}(A)  \Bigg) \bigcup
\Bigg( \bigcup_{ j\in \overline{S}} 	\Gamma^{\overline{S}}_{j}(A)  \Bigg) \bigcup \Bigg( \bigcup_{i\in S, j\in\overline{S}} V^{S}_{ij}(A)  \Bigg).
\]
Precisely this form of the set $C^S(A)$ allows us to represent it in the following form:

\begin{lemma} \label{lemma:1}
	For an arbitrary matrix $A$ and an arbitrary nonempty proper subset $S$ of $N$:
	\[ C^S(A)= \bigcup_{i\in S, j\in \overline{S}} W^{S}_{ij}(A).  
	\]
\end{lemma}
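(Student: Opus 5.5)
We use the representation $C^S(A)=C^S_1(A)$ established just before the lemma. It then suffices to show, for each fixed pair $i\in S$, $j\in\overline S$, the identity
\[
W^S_{ij}(A)=\Gamma^S_i(A)\cup\Gamma^{\overline S}_j(A)\cup V^S_{ij}(A).
\]
Taking the union over all $i\in S$, $j\in\overline S$ then gives the statement. This works because $S$ is a nonempty proper subset, so such pairs exist. Moreover, each disk $\Gamma^S_i(A)$ (resp. $\Gamma^{\overline S}_j(A)$) appears in the union for at least one pair, namely with any $j\in\overline S$ (resp. $i\in S$). So the union of the pairwise sets reproduces exactly $C^S_1(A)$.

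To prove the pairwise identity, write $\alpha=|z-a_{ii}|-r_i^S$ and $\beta=|z-a_{jj}|-r_j^{\overline S}$. Then $z\in W^S_{ij}(A)$ means $\max\{0,\alpha\}\max\{0,\beta\}\le r_i^{\overline S}r_j^S$.

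For the inclusion $\supseteq$ we argue as follows.
\begin{itemize}
\item If $z\in\Gamma^S_i(A)$ then $\alpha\le 0$, so the left-hand side is $0$, which is at most the nonnegative right-hand side. The case $z\in\Gamma^{\overline S}_j(A)$ is symmetric.
\item If $z\in V^S_{ij}(A)$ but lies in neither disk, then $\alpha,\beta>0$. The maxima then equal $\alpha$ and $\beta$, and the defining inequality of $V^S_{ij}(A)$ is exactly the one for $W^S_{ij}(A)$.
\end{itemize}

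For the inclusion $\subseteq$, take $z\in W^S_{ij}(A)$.
\begin{itemize}
\item If $\alpha\le0$ then $z\in\Gamma^S_i(A)$, and if $\beta\le0$ then $z\in\Gamma^{\overline S}_j(A)$.
\item Otherwise $\alpha,\beta>0$, the maxima drop out, and $\alpha\beta\le r_i^{\overline S}r_j^S$, i.e.\ $z\in V^S_{ij}(A)$.
\end{itemize}

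There is no real obstacle. The only point needing care is the bookkeeping in the final union: every $\Gamma^S_i(A)$ and $\Gamma^{\overline S}_j(A)$ must be covered by some pair, which is where properness of $S$ is used. This also explains why the disks, which appear explicitly in $C^S_1(A)$, are absorbed into the $W^S_{ij}(A)$ sets via the $\max\{0,\cdot\}$ truncation.
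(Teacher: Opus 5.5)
Your proof is correct and follows essentially the same route as the paper: reduce to $C^S_1(A)$, prove the pairwise identity $W^S_{ij}(A)=\Gamma^S_i(A)\cup\Gamma^{\overline S}_j(A)\cup V^S_{ij}(A)$ by a case analysis on the signs of $|z-a_{ii}|-r_i^S$ and $|z-a_{jj}|-r_j^{\overline S}$, then take the union over all pairs. Your explicit remark that properness of $S$ ensures every disk is picked up by some pair $(i,j)$ is a detail the paper leaves implicit.
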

\begin{proof}
	Obviously,  for all $i\in S, j\in \overline{S}$, it holds that 
	\[\Gamma^{S}_{i}(A)   \cup \Gamma^{\overline{S}}_{j}(A)  \cup V_{ij}^S(A)\subseteq W_{ij}^S(A),\] so, it remains to show that  for all $i\in S$ and $j\in \overline{S}$:
	\[
	z \in  W^{S}_{ij}(A)  
	\quad \Longrightarrow \quad
	z \in 	\Gamma^{S}_{i}(A)   \cup \Gamma^{\overline{S}}_{j}(A)  \cup V^{S}_{ij}(A).
	\]
	Let  $i\in S, j\in \overline{S}$ be such that $z \in  W^{S}_{ij}(A) $, i.e.
	\[
	\Big(\max \big\{0,|z-a_{ii}|-r_i^S \big\}\Big)
	\Big(\max\big\{0,|z-a_{jj}|-r_j^{\overline{S}}\big\}\Big)
	\le
	r_i^{\overline{S}}r_j^S .\]
	We now consider the possible cases.
	\begin{itemize}
		\item  If \; $|z-a_{ii}|-r_i^S > 0$ \, and \, $|z-a_{jj}|-r_j^{\overline{S}}>0$ $\quad \Longrightarrow \quad$ $z \in V_{ij}^S(A)$.
		\item If \; $|z-a_{ii}|-r_i^S > 0$ \, and \, $|z-a_{jj}|-r_j^{\overline{S}}\le 0$ $\quad \Longrightarrow \quad$ $z \in \Gamma^{\overline{S}}_{j}(A)$.
		\item If \; $|z-a_{ii}|-r_i^S \le 0$ \, and \, $|z-a_{jj}|-r_j^{\overline{S}}>0$ $\quad \Longrightarrow \quad$ $z \in \Gamma_{i}^S(A)$.
		\item If \; $|z-a_{ii}|-r_i^S \le 0$ \, and \, $|z-a_{jj}|-r_j^{\overline{S}}\le 0$ $\quad \Longrightarrow \quad$ $z \in \Gamma_i^S(A) \bigcap \Gamma^{\overline{S}}_{j}(A)$.
	\end{itemize}
	The proof is completed.
\end{proof}
However, it is interesting to note that the  set $C^S(A)$, for an arbitrary nonempty proper subset $S$ of $N$, can be represented as an intersection-based construction rather than as a union of ``small'' Ger\v sgorin disks:
\[ C^S(A)=C^S_2(A):=\Bigg( \bigcap_{i\in S} 	\Gamma^{S}_{i}(A)  \Bigg) \bigcap
\Bigg( \bigcap_{ j\in \overline{S}} 	\Gamma^{\overline{S}}_{j}(A)  \Bigg) \bigcup \Bigg( \bigcup_{i\in S, j\in\overline{S}} V^{S}_{ij}(A)  \Bigg).
\]
In order to prove that, it is sufficient to show that 
\[
\bigcup_{i\in S, j\in\overline{S}} 	\Big(\Gamma^{S}_{i}(A)   \bigcup
\Gamma^{\overline{S}}_{j}(A)\Big)    \subseteq
C^S_2(A).
\]
Take $z \in 	\bigcup_{i\in S, j\in\overline{S}} 	\Big(\Gamma^{S}_{i}(A)   \bigcup
\Gamma^{\overline{S}}_{j}(A)\Big)  $. This means that there exist 	
$i\in S$, $j\in \overline{S}$, such that
\[
|z-a_{ii}|\le r_i^S \; \vee \; |z-a_{jj}|\le r_j^{\overline{S}}  .
\]
Then, there are three possibilities:
\begin{itemize}
	\item $|z-a_{ii}|-r_i^S \le 0$ \; and \; $|z-a_{jj}|-r_j^{\overline{S}}>0$ $\quad \Longrightarrow \quad$ $z \in V_{ij}^S(A)$,
	\item $|z-a_{ii}|-r_i^S > 0$ \; and \; $|z-a_{jj}|-r_j^{\overline{S}}\le 0$ $\quad \Longrightarrow \quad$ $z \in V^{S}_{ij}(A)$,
	\item $|z-a_{ii}|-r_i^S \le 0$ \; and \; $|z-a_{jj}|-r_j^{\overline{S}}\le 0$ $\quad \Longrightarrow \quad$ $z \in \Gamma_i^S(A) \bigcap \Gamma^{\overline{S}}_{j}(A)$. 
		
	\noindent If there exists $p\in S$ such that
	$z\notin\Gamma_p^S(A)$, then, since $z\in\Gamma_j^{\overline S}(A)$, we have $z\in V_{pj}^S(A)$. If there exists $q\in\overline S$ such that
	$z\notin\Gamma_q^{\overline S}(A)$,  then, since $z\in\Gamma_i^S(A)$, we have $z\in V_{iq}^S(A)$. Finally, if there are no such indices $p$ and $q$, i.e. if for all $p\in S$ and $q\in\overline S$: $z\in\Gamma_p^S(A)$ and $z\in\Gamma_q^{\overline S}(A)$, then $z\in \left(\bigcap_{p\in S}\Gamma_p^S(A)\right)\cap \left(\bigcap_{q\in\overline S}\Gamma_q^{\overline S}(A)\right)$.
\end{itemize}
Therefore, in all cases,
$z\in C_2^S(A)$.
If we denote 
\[ \Gamma^S_*(A) :=\bigcap_{i\in S, j\in \overline{S}} \Big(	\Gamma^{S}_{i}(A)\bigcap \Gamma^{\overline{S}}_{j}(A) \Big)\]
the following form of the  set $C^S(A)$:
\begin{equation} \label{ekv2}C^S(A)=C^S_2(A):=\Gamma^S_*(A) \bigcup\Bigg( \bigcup_{i\in S, j\in\overline{S}}   V^{S}_{ij}(A)  \Bigg).
\end{equation}
shows that it is sufficient to plot only the sets $V_{ij}^S(A) $ if there is no intersection of all ``small'' Ger\v sgorin disks.

\subsection{Ger\v sgorin and Dashnic-Zusmanovich inclusion-exclusion set}

For \(i,k\in N,\ i\neq k\), define
\[
\Delta_{ik}(A)
=
\left\{z\in\mathbb C:\ |z-a_{kk}|<2|a_{ki}|-r_k\right\},
\quad \text{and} \quad
\Delta_i(A)=\bigcup_{k\in N\setminus\{i\}}\Delta_{ik}(A).
\]
Then the Ger\v sgorin inclusion-exclusion localization set is given by
\[
\Omega(A)=\bigcup_{i\in N}\Big(\Gamma_i(A)\setminus\Delta_i(A)\Big).
\]
According to Theorem 2 in \cite{Mel},
$
\sigma(A)\subseteq\Omega(A)\subseteq\Gamma(A).
$

In \cite{ZS}, 
the corresponding Dashnic-Zusmanovich exclusion set is defined by
\[
{\cal L}_{ij}(A)=\Big\{z\in\mathbb C:|z-a_{ii}|\Big(|z-a_{jj}|+r_j^i\Big)
<\Big(2|a_{ij}|-r_i\Big)|a_{ji}|\Big\},
\]
and the Dashnic-Zusmanovich (DZ) inclusion-exclusion localization set is given by
\[
\Theta(A) =
\bigcap_{i\in N}
\bigcup_{j\in N\setminus\{i\}}
\Big( {\cal D}_{ij}(A)\setminus {\cal L}_{ij}(A) \Big).
\]
According to Theorems 5 and 6 in \cite{ZS},
$
\sigma(A)\subseteq\Theta(A)\subseteq{\cal D}(A)\subseteq \Gamma(A).
$

\section{$S-$SDD inclusion-exclusion sets} \label{sec:SSDDinclexcl}

In this section, we will derive new inclusion-exclusion localization sets obtained by excluding certain subsets from the corresponding CKV localization sets. 

For any $i\in S$ and $j\in\overline S$, define
\begin{equation}\label{eq:psi}
	\begin{aligned}
		\Psi_{ij}^S(A):=
		\Bigg\{
		z\in\mathbb C&:		
		\Big(|z-a_{ii}|+r_i^S\Big)
		\Big(|z-a_{jj}|+r_j^{\overline{S}}\Big)<\\
		&<\Big(\max\big\{0,2|a_{ij}|-r_i^{\overline{S}}\big\}\Big)
		\Big(\max\big\{0,2|a_{ji}|-r_j^S\big\}\Big)
		\Bigg\}.
	\end{aligned}
\end{equation}
\begin{theorem}\label{glavna}
	Let \(A=[a_{ij}]\in\mathbb C^{n\times n}\), and let \(S\) be any nonempty proper subset of
	$N$. Then,
	\[
	\sigma(A)
	\subseteq   \widehat{\Xi}^{S}(A) :=  \bigcup_{i\in S, j\in 
		\overline{S}}  \Big(W_{ij}^S(A) \setminus \Psi_{ij}^S(A)\Big) .
	\]
\end{theorem}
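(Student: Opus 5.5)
The plan is to prove that the \emph{same} pair of indices $(i,j)$ puts $\lambda$ in $W_{ij}^S(A)$ and keeps it out of $\Psi_{ij}^S(A)$. Let $\lambda\in\sigma(A)$ with eigenvector $x\neq 0$. Choose $i\in S$ with $|x_i|=\max_{k\in S}|x_k|$ and $j\in\overline S$ with $|x_j|=\max_{k\in\overline S}|x_k|$. Since $x\neq0$, at least one of $x_i,x_j$ is nonzero. We will show $\lambda\in W_{ij}^S(A)\setminus\Psi_{ij}^S(A)$ for this pair.

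\emph{Inclusion step.} Write row $i$ of $Ax=\lambda x$ as $(\lambda-a_{ii})x_i=\sum_{k\in S\setminus\{i\}}a_{ik}x_k+\sum_{k\in\overline S}a_{ik}x_k$. Using maximality, this gives $|\lambda-a_{ii}||x_i|\le r_i^S|x_i|+r_i^{\overline S}|x_j|$. Since the right side is nonnegative, we get $\max\{0,|\lambda-a_{ii}|-r_i^S\}|x_i|\le r_i^{\overline S}|x_j|$. Row $j$ gives symmetrically $\max\{0,|\lambda-a_{jj}|-r_j^{\overline S}\}|x_j|\le r_j^S|x_i|$.
\begin{itemize}
\item If $x_i,x_j\neq0$, multiply the two inequalities and divide by $|x_i||x_j|$ to get $\lambda\in W_{ij}^S(A)$.
\item If $x_j=0$, the first inequality forces the first max-factor to vanish, so $\lambda\in W_{ij}^S(A)$ trivially.
\item If $x_i=0$, the same argument applies with the roles of the two rows exchanged.
\end{itemize}
Alternatively, one could cite the CKV argument together with Lemma~\ref{lemma:1}, but it must be for this specific pair $(i,j)$.

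\emph{Exclusion step (the key new part).} In row $i$, isolate the term $a_{ij}x_j$:
\[
a_{ij}x_j=(\lambda-a_{ii})x_i-\sum_{k\in S\setminus\{i\}}a_{ik}x_k-\sum_{k\in\overline S\setminus\{j\}}a_{ik}x_k .
\]
Taking moduli and using maximality gives $|a_{ij}||x_j|\le(|\lambda-a_{ii}|+r_i^S)|x_i|+(r_i^{\overline S}-|a_{ij}|)|x_j|$, that is,
\[
\bigl(2|a_{ij}|-r_i^{\overline S}\bigr)|x_j|\le\bigl(|\lambda-a_{ii}|+r_i^S\bigr)|x_i| .
\]
The right-hand side is nonnegative, so the left coefficient may be replaced by $\max\{0,2|a_{ij}|-r_i^{\overline S}\}$. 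Analogously, row $j$ yields
\[
\max\{0,2|a_{ji}|-r_j^S\}\,|x_i|\le\bigl(|\lambda-a_{jj}|+r_j^{\overline S}\bigr)|x_j| .
\]
\begin{itemize}
\item If $x_i,x_j\neq0$, multiplying the two inequalities and cancelling $|x_i||x_j|>0$ gives $\bigl(|\lambda-a_{ii}|+r_i^S\bigr)\bigl(|\lambda-a_{jj}|+r_j^{\overline S}\bigr)\ge$ the product of the two max-terms. This contradicts the strict inequality defining $\Psi_{ij}^S(A)$, so $\lambda\notin\Psi_{ij}^S(A)$.
\item If $x_j=0$, then $x_i\neq0$ and the second inequality forces $\max\{0,2|a_{ji}|-r_j^S\}=0$. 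The right side of the defining inequality of $\Psi_{ij}^S(A)$ is then $0$, while its left side is $\ge0$, so the strict inequality fails and $\lambda\notin\Psi_{ij}^S(A)$.
\item If $x_i=0$, the same argument applies using the first inequality.
\end{itemize}

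The main obstacle is making sure a single choice of $(i,j)$ serves both steps, and handling the degenerate cases where $x$ vanishes on $S$ or on $\overline S$. The max-normalized choice of $i$ and $j$ handles both, and the $\max\{0,\cdot\}$ in the definition of $\Psi_{ij}^S(A)$ is exactly what makes the sign issues and degenerate cases go through.
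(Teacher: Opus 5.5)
Your proof is correct and follows essentially the same route as the paper. Both arguments use a maximal index in each of $S$ and $\overline S$ and the same row inequalities for membership in $W_{ij}^S(A)$ and for the exclusion bound, and both handle the case where $x$ vanishes on one block through the $\max\{0,\cdot\}$ factors. The only difference is cosmetic: the paper takes a global maximizer $k\in S$ and dismisses $k\in\overline S$ via the symmetry $\widehat{\Xi}^S(A)=\widehat{\Xi}^{\overline S}(A)$, whereas you maximize separately over $S$ and $\overline S$ and treat $x_i=0$ and $x_j=0$ directly.
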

\begin{proof}
	Choose an arbitrary eigenvalue \(\lambda\) of the matrix \(A\).
	Then there exists a nonzero vector \(x\) such that $Ax=\lambda x.$ 
	
	Let $k$ be an index for which
	$|x_k|=\|x\|_\infty >0 .$ Suppose that $k \in S$ and denote by $\ell$ an index from $\overline{S}$, for which
	$ |x_\ell |=\max_{i \in \overline{S}}|x_i| .$		Since for all \(i\in N\),
	\[
	(\lambda-a_{ii})x_i=
	\sum_{j\in S\setminus\{i\}}a_{ij}x_j
	+
	\sum_{j\in\overline S\setminus\{i\}}a_{ij}x_j,
	\]
	and consequently
	\[
	|\lambda-a_{ii}||x_i| 	\le 	r_i^S|x_k| + 	r_i^{\overline{S}}|x_\ell|,
	\] 
	we have that
	\[
	\Big(|\lambda-a_{kk}|-r_k^S\Big)|x_k|
	\le 	r_k^{\overline{S}}|x_\ell|
	\quad \mbox{and} \quad 
	\Big(|\lambda-a_{\ell\ell}|-r_\ell^{\overline{S}}\Big)|x_\ell|
	\le 	r_\ell^S|x_k|.
	\]
	Therefore,
	\begin{equation}
		\max\big\{0,|\lambda-a_{kk}|-r_k^S\big\}|x_k|
		\le 	r_k^{\overline{S}}|x_\ell|,
		\label{eq:firstineq}
	\end{equation}	and
	\begin{equation}
		\max\big\{0,|\lambda-a_{\ell\ell}|-r_\ell^{\overline{S}}\big\}|x_\ell|
		\le
		r_\ell^S|x_k|.
		\label{eq:secondineq}
	\end{equation}
	
	\begin{itemize}
		\item \textbf{Case 1.} 
		Suppose that \( |x_\ell|>0 \).
		
		Multiplying inequalities \eqref{eq:firstineq} and \eqref{eq:secondineq}
		and dividing by \(|x_k||x_\ell|\), we obtain
		\begin{equation*}
			\max\big\{0,|\lambda-a_{kk}|-r_k^S\big\}
			\max\big\{0,|\lambda-a_{\ell\ell}|-r_\ell^{\overline{S}}\big\}
			\le
			r_k^{\overline{S}}r_\ell^S,
		\end{equation*}
		\begin{equation}
			\mbox{i.e.} \;\;
			\lambda\in W_{k\ell}^S(A).
			\label{V}
		\end{equation}
		On the other hand, for all \(i\in N\) and all
		\(t\in N\setminus\{i\}\),
		\[ a_{it}x_t= (\lambda -a_{ii})x_i -\sum_{j\in \overline{S}\setminus\{i,t\}}a_{ij}x_j-  \sum_{j\in S\setminus\{i,t\}}a_{ij}x_j .\]
		Hence,
		\begin{equation*}
			\begin{aligned}
				|a_{\ell k}||x_k| &\le |\lambda -a_{\ell \ell}||x_\ell| +\sum_{j\in \overline{S}\setminus\{\ell\}}|a_{\ell j}||x_\ell|+  \sum_{j\in S\setminus\{k\}}|a_{\ell j}||x_k|\\
				&= |\lambda -a_{\ell \ell}||x_\ell| +r_\ell^{\overline{S}}|x_\ell|+\Big(r_\ell^S-|a_{\ell k}|\Big)|x_k|,\\[0.2cm]
				|a_{k\ell}||x_\ell| &\le |\lambda -a_{kk}||x_k| +\sum_{j\in \overline{S}\setminus\{\ell\}}|a_{k j}||x_\ell|+  \sum_{j\in S\setminus\{k\}}|a_{k j}||x_k|\\
				&=   |\lambda -a_{kk}||x_k| +\Big(r_k^{\overline{S}}-|a_{k\ell}|\Big)|x_\ell|+r_k^S|x_k|,
			\end{aligned}
		\end{equation*}
		i.e.
		\[ \Big(2|a_{\ell k}|-r_\ell^S\Big)|x_k| \le  \Big(|\lambda -a_{\ell \ell}| +r_\ell^{\overline{S}}\Big)|x_\ell| \]
		and \[\Big(2|a_{k\ell}|-r_k^{\overline{S}}\Big)|x_\ell| \le   \Big(|\lambda -a_{kk}|+r_k^S\Big)|x_k|.\]
		Therefore,
		\begin{equation}
			\max\Big\{0,2|a_{\ell k}|-r_\ell^S\Big\}|x_k| \le  \Big(|\lambda -a_{\ell \ell}| +r_\ell^{\overline{S}}\Big)|x_\ell|,
			\label{eq1:max_other hand}
		\end{equation}and
		\begin{equation}
			\max\Big\{0,2|a_{k\ell}|-r_k^{\overline{S}}\Big\}|x_\ell| \le   \Big(|\lambda -a_{kk}|+r_k^S\Big)|x_k|.
			\label{eq2:max_other hand}
		\end{equation}
		Multiplying inequalities (\ref{eq1:max_other hand}) and (\ref{eq2:max_other hand}) and dividing by
		\(|x_k||x_\ell|\), we obtain
		\[	\max\Big\{0,2|a_{\ell k}|-r_\ell^S\Big\}  \max\Big\{0,2|a_{k\ell}|-r_k^{\overline{S}}\Big\} 
				\le \Big(|\lambda -a_{\ell \ell}| +r_\ell^{\overline{S}}\Big) \Big(|\lambda -a_{kk}|+r_k^S\Big), \]
\begin{equation}
			\mbox{i.e.} \;\;
			\lambda \notin \Psi_{k\ell}^S(A).
			\label{W}
		\end{equation}	
		Finally, (\ref{V}) and (\ref{W}) mean that $	\lambda\in
		\Big(W_{k\ell}^S(A)\setminus \Psi_{k\ell}^S(A)\Big).$
		
		\item \textbf{Case 2.} Suppose that  \( |x_\ell|=0 \). This means that  $x_j =0$ for all $j \in \overline{S}$, and for all $ i \in N $
		$$ (\lambda -a_{ii})x_i= \sum_{j\in S\setminus\{i\}}a_{ij}x_j.$$
		Hence,
		\begin{equation*} |\lambda -a_{kk}||x_k| \le r_k^S |x_k|, \;\;
			\mbox{i.e.} \;\; 
			\lambda \in \Gamma_k^S(A).
			\label{g} 
		\end{equation*} 
		On the other hand,  we have:
		\[ a_{\ell k}x_k= (\lambda -a_{\ell \ell})x_\ell -  \sum_{j\in S\setminus\{k\}}a_{\ell j}x_j = -  \sum_{j\in S\setminus\{k\}}a_{\ell j}x_j .\]
		Consequently,
		\[ |a_{\ell k}||x_k|\le   \Big( r_\ell^S-|a_{\ell k}|\Big) |x_k| 
		\quad \mbox{and} \quad 2|a_{\ell k}|\le   r_\ell^S .\]
		Therefore, $\max\big\{0, 2|a_{\ell k}|- r_\ell^S\big\}=0$, hence 
		$\lambda \notin  \Psi_{k\ell}^S(A)$. 
			Since $\Gamma_{k}^S(A) \subseteq W_{k\ell}^S(A)$, we conclude that $\lambda \in \Big(W_{k\ell}^S(A) \setminus \Psi_{k\ell}^S(A)\Big)$.
	\end{itemize}
	This  completes the proof if $k \in S$. However, the case \(k\in\overline S\) does not need to be considered separately, since for any $\emptyset \ne S \subsetneq   N$,
	due to symmetry, we immediately have
	\[
	W_{ij}^S(A)\setminus \Psi_{ij}^S(A)
	=
	W_{ji}^{\overline{S}}(A)\setminus \Psi_{ji}^{\overline{S}}(A),
	\]
	for all $i \in S, j \in {\overline{S}}$, and consequently
	$\widehat{\Xi}^{S}(A)=\widehat{\Xi}^{\overline{S}}(A) .$
	The proof is completed.
\end{proof}

\begin{corollary}\label{posl}
	Let \(A=[a_{ij}]\in\mathbb C^{n\times n}\), and let \(S\) be any  nonempty subset of $N$. Then
	\[
	\sigma(A)
	\subseteq   \Xi^{S}(A)=
	\left\{ 
	\begin{array}{lll}
		\widehat{\Xi}^{S}(A) & \mbox{if} & \emptyset \neq S \subsetneq N\\
		\Omega(A)& \mbox{if} &  S = N
	\end{array}. \right.
	\]
\end{corollary}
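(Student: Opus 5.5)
The corollary is a case split on $S$, and each case reduces to a result already available.

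\emph{Case $\emptyset\neq S\subsetneq N$.} Here $\Xi^S(A)=\widehat{\Xi}^S(A)$ by definition, so the inclusion $\sigma(A)\subseteq\Xi^S(A)$ is exactly Theorem~\ref{glavna}. Nothing more is needed.

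\emph{Case $S=N$.} Now $\overline S=\emptyset$, so there are no pairs $i\in S$, $j\in\overline S$. The union defining $\widehat{\Xi}^S(A)$ is therefore empty, and the construction of Theorem~\ref{glavna} cannot be used as stated; this is why the corollary substitutes $\Omega(A)$. The required inclusion $\sigma(A)\subseteq\Omega(A)$ is Melman's result, Theorem~2 in \cite{Mel}, which the Preliminaries quote explicitly. I would cite it directly.

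To justify that $\Omega(A)$ is the natural value for $S=N$, rather than an arbitrary choice, I would add a short consistency remark. When $S=N$ we have $r_i^S=r_i$, so $\Gamma_i^S(A)=\Gamma_i(A)$ and $C^N(A)=\Gamma(A)$. The exclusion sets should then degenerate to Melman's: the argument of Case~2 in the proof of Theorem~\ref{glavna}, with $x$ supported in $S$, reproduces the derivation of the disks $\Delta_{ik}(A)$. Concretely, take an eigenvalue $\lambda$ with $|x_i|=\|x\|_\infty$. Then $\lambda\in\Gamma_i(A)$. For $k\neq i$, we have
\[
|a_{ki}||x_i|\le|\lambda-a_{kk}||x_k|+(r_k-|a_{ki}|)|x_i|,
\]
and $|x_k|\le|x_i|$, which together give $|\lambda-a_{kk}|\ge 2|a_{ki}|-r_k$. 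Hence $\lambda\notin\Delta_i(A)$. This self-contained argument could replace the citation if desired.

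The main obstacle is purely definitional: the $S=N$ case has to be handled separately because the pairwise sets $W_{ij}^S$ and $\Psi_{ij}^S$ are undefined there. Once this is noted, the corollary follows immediately from Theorem~\ref{glavna} together with \cite{Mel}.
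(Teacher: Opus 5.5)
Your proposal is correct and matches the paper's proof, which likewise just invokes Theorem~\ref{glavna} for $\emptyset\neq S\subsetneq N$ and Melman's Theorem~2 in \cite{Mel} for $S=N$. Your optional direct argument for $S=N$ is also valid and makes that case self-contained: take $|x_i|=\|x\|_\infty$ and use $|x_k|\le|x_i|$ to get $|\lambda-a_{kk}|\ge 2|a_{ki}|-r_k$. Calling it a specialization of Case~2 is only heuristic, though, since there the term $|\lambda-a_{\ell\ell}||x_\ell|$ vanishes rather than being bounded.
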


\begin{proof}
	The case $S=N$ has been proved in  \cite{Mel}, while the case $\emptyset \neq S \subsetneq N$ is covered by Theorem \ref{glavna}.
\end{proof}

We end this section with an observation that the best possible choice of inclusion-exclusion localization can be obtained by intersection over all possible choices of $S$. More precisely, using Corollary \ref{posl}, we can define 
the corresponding CKV inclusion-exclusion localization set by
\[
\Xi(A):=
\bigcap_{\emptyset\neq S\subseteq N}
\Xi^S(A).
\]

\section{Comparisons with existing inclusion-exclusion localization sets}

In this section, we compare the proposed CKV inclusion-exclusion localization set with existing inclusion-exclusion localization sets. 

According to Corollary  \ref{posl}, the Ger\v sgorin inclusion-exclusion localization set is a special case
of our CKV inclusion-exclusion localization set  for $S=N$.

Now, we will show that the Dashnic-Zusmanovich inclusion-exclusion localization set is also a special case of our CKV inclusion-exclusion localization set. In fact, the DZ inclusion-exclusion localization set arises as a special case of our construction when intersections are taken over all singleton subsets \(S=\{i\}\), \(i\in N\). 

Indeed, for \(S=\{i\}\), we have
\[
r_i^S=0,
\quad
r_j^S=|a_{ji}|,
\quad
r_i^{\overline{S}}=r_i,
\quad
r_j^{\overline{S}}=r_j-|a_{ji}|, \quad
\mbox{for all} \;\;  j\in N\setminus\{i\},
\]
 \begin{equation*}
		\begin{aligned}
			W_{ij}^S(A)
			&=
			\Big\{
			z\in\mathbb C:
			|z-a_{ii}|
			\max\big\{0,|z-a_{jj}|-r_j+|a_{ji}|\big\}
			\le
			r_i|a_{ji}|
			\Big\}\\
			&=\Big\{
			z\in\mathbb C:
			|z-a_{ii}|
			\Big(|z-a_{jj}|-r_j+|a_{ji}|\Big)
			\le
			r_i|a_{ji}|
			\Big\}=
			{\cal D}_{ij}(A),\\[0.2cm]
			\Psi_{ij}^S(A)
			& =
			\Big\{
			z\in\mathbb C:
			|z-a_{ii}|
			\Big(
			|z-a_{jj}|+r_j-|a_{ji}|
			\Big) <\Big(\max\big\{0,2|a_{ij}|-r_i\big\}\Big)|a_{ji}|
			\Big\}\\
			& =
			\Big\{
			z\in\mathbb C:
			|z-a_{ii}|
			\Big(
			|z-a_{jj}|+r_j-|a_{ji}|
			\Big)<
			\Big(2|a_{ij}|-r_i\Big)|a_{ji}|
			\Big\}=
			{\cal L}_{ij}(A),
		\end{aligned}
\end{equation*}
and 
\[W_{ij}^S(A)\setminus \Psi_{ij}^S(A) =
{\cal D}_{ij}(A) \setminus {\cal L}_{ij}(A).
\]
Hence,
\[
\Xi^{\{i\}}(A)
=
\bigcup_{j\in N\setminus\{i\}}
\Big(
{\cal D}_{ij}(A)\setminus {\cal L}_{ij}(A)
\Big).
\]
Taking the intersection over all singleton subsets \(S=\{i\}\), \(i\in N\), we obtain
\[
\bigcap_{i\in N}\Xi^{\{i\}}(A)
=
\Theta(A).
\]
Obviously, 
\[
\Xi(A)=
\bigcap_{\emptyset\neq S\subseteq N}
\Xi^S(A)
\subseteq
\bigcap_{i\in N}\Xi^{\{i\}}(A)
=
\Theta(A).
\]

These relations show that the proposed inclusion-exclusion localization set can be viewed as a natural generalization of the Ger\v sgorin and Dashnic-Zusmanovich inclusion-exclusion localization sets. In fact, for an arbitrary matrix $A$, its spectrum is contained in $\Xi(A)$, for which 
$$\begin{array}{ccccc}
	\Xi(A) &\subseteq & \Theta(A)& & \Omega(A)\\
	\mbox{\rotatebox[]{-90}{$\subseteq$}} & 
	&\mbox{\rotatebox[]{-90}{$\subseteq$}} & &\mbox{\rotatebox[]{-90}{$\subseteq$}} \\
	C(A) &\subseteq & {\cal D}(A)&\subseteq &\Gamma(A) \\
\end{array}.
$$
It is interesting to note that $\Theta(A) \nsubseteq \Omega(A)$. To illustrate this, we consider the matrix from \cite{MPP}.
\begin{example}
	For the $4 \times 4$ matrix
	$$ 
	A = 
	\left[\begin{array}{rrrr}
		5 & 4 & -1 & 0  \\
		5 & 2 & 0 & 1  \\
		1& -1 & -4 & 1  \\
		1 & 1 & 0 & 0.2  \\
	\end{array}\right],
	$$
	the sets $\Omega(A)$ and $\Theta(A)$ are incomparable by inclusion, see Figure \ref{dzg}.
	\begin{figure}[ht]
		\centering
		\begin{minipage}{0.55\textwidth}
			\centering
			\includegraphics[width=\linewidth]{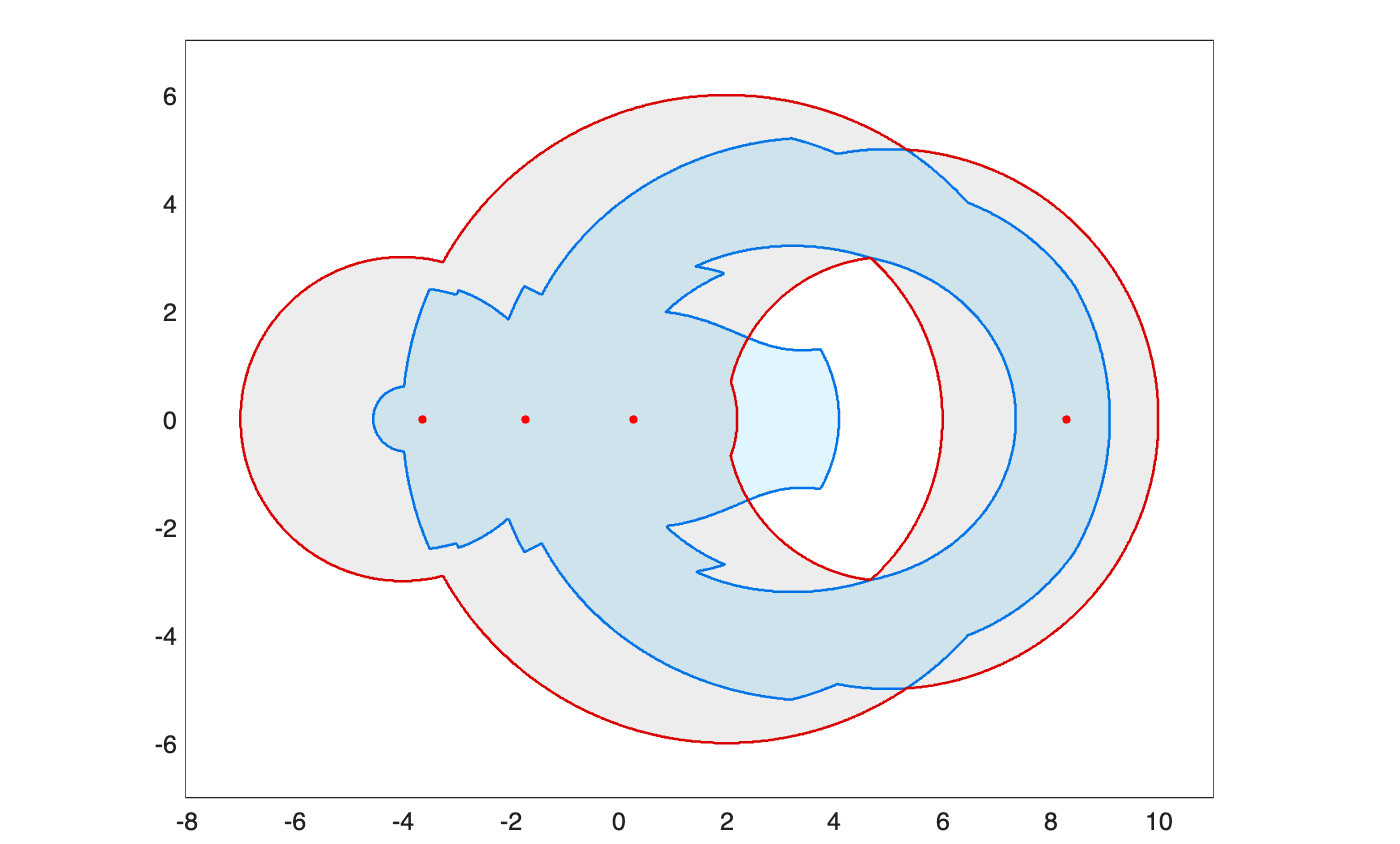}
			\caption{Sets $\Omega(A)$ (gray, red boundary) and
				$\Theta(A)$ (light blue, blue boundary).}
			\label{dzg}
		\end{minipage}
	\end{figure}

\end{example}


\section{Remarks on plotting $\Xi^S(A)$ sets}

The definition of the set $\Xi^S(A)$ given in Corollary \ref{posl} was suitable from a theoretical point of view,  and now we will derive another equivalent definition,  which follows from the equivalent form $C^S_2(A)$ of the $C^S(A)$ set given in (\ref{ekv2}):
\[
C^S(A)=C^S_2(A):=\Gamma^S_*(A) \bigcup\Bigg( \bigcup_{i\in S, j\in\overline{S}}   V^{S}_{ij}(A)  \Bigg),
\]
from one side, and a closer look at the $\Psi_{ij}^S(A)$ sets, on the other side.

Since the case $S=N$ reduces to Ger\v sgorin, and $S$ being a singleton reduces to Dashnic-Zusmanovich, let us suppose that both $S$ and $\overline{S}$ contain at least two indices. 
If we define
\[
\Upsilon_{ij}^S(A):=
\Bigg\{
z\in\mathbb C:		
\Big(|z-a_{ii}|+r_i^S\Big)
\Big(|z-a_{jj}|+r_j^{\overline{S}}\Big)
<\Big(2|a_{ij}|-r_i^{\overline{S}}\Big)
\Big(2|a_{ji}|-r_j^S\Big)
\Bigg\},
\]
then, for any pair $i \in S$, $j \in \overline{S}$, it holds that
\[\Psi_{ij}^S(A)=\left\{
\begin{array}{lll}
	\emptyset  & \mbox{if} & 2|a_{ij}|\le r_i^{\overline{S}}\;\; \mbox{and} \;\; 2|a_{ji}|> r_j^{{S}}\\
	\emptyset & \mbox{if} & 2|a_{ij}| > r_i^{\overline{S}}\;\; \mbox{and} \;\; 2|a_{ji}|\le r_j^{{S}}    \\
	\Upsilon_{ij}^S(A)  & \mbox{if} & 2|a_{ij}| > r_i^{\overline{S}} \;\; \mbox{and} \;\; 2|a_{ji}|> r_j^{{S}}\\
	\emptyset  & \mbox{if} & 2|a_{ij}| \le r_i^{\overline{S}} \;\; \mbox{and} \;\; 2|a_{ji}|\le r_j^{{S}}
\end{array}.
\right.
\]
It is easy to see that for every $i \in S$ there exists at most one index $j \in \overline{S}$ for which  $2|a_{ij}| > r_i^{\overline{S}}$. Indeed, if we suppose that there exist two distinct indices, $j \in \overline{S}$ and $k \in \overline{S}$, such that  
\[2|a_{ij}| > r_i^{\overline{S}} \; \wedge \;  2|a_{ik}| > r_i^{\overline{S}} ,\]
adding these inequalities gives us
\[2\big(|a_{ij}| +|a_{ik}| \big)>2r_i^{\overline{S}}  ,\]
which is an obvious contradiction.

Also, for every  $j \in \overline{S}$ there exists at most one index $i \in S$ for which  $2|a_{ji}| > r_j^S$. Consequently, there exists at most one pair
$(k,\ell)\in S\times \overline{S}$
such that
$2|a_{k\ell}|>r_k^{\overline{S}}$
and
$2|a_{\ell k}|>r_\ell^S.$ Hence, 
\[\bigcup_{ i\in S, j\in \overline{S}}   \Psi_{ij}^S(A)=
\left\{
\begin{array}{ll}
	\Psi_{k\ell}^S(A)= \Upsilon_{k\ell}^S(A) & \mbox{if there exist} \; k \in S ,\ell   \in \overline{S}: 
	2|a_{k\ell}| > r_k^{\overline{S}}\\&  \mbox{and} \;\; 2|a_{\ell k}|> r_\ell^{{S}}\\[0.2cm]
	\emptyset & \mbox{otherwise}
\end{array}.
\right.
\]
Having said all of this, the equivalent form of the $S-$SDD inclusion-exclusion set is
\[
\widehat{\Xi}^{S}(A)=\Bigg(\Gamma^S_*(A) \bigcup\Big( \bigcup_{i\in S, j\in\overline{S}}   V^{S}_{ij}(A)\Big)  \Bigg) \setminus \Psi_{k\ell}^S(A),
\]
where $k \in S ,\ell   \in \overline{S}$ denote the unique pair of indices (if such a pair exists) satisfying
\[2|a_{k\ell}| > r_k^{\overline{S}} \;\; \mbox{and} \;\; 2|a_{\ell k}|> r_\ell^{{S}} .\]

To conclude, we present the sequence of steps in drawing  the $S-$SDD inclusion-exclusion region:
\begin{enumerate}[label=\arabic*.]
	\item 
	Choose $S \subset N: 2\le |S| \le n-2$.
	\item Check if 	$\Gamma_*^S(A)$ is nonempty. If yes, set
	$\widehat{\Xi}^S(A)=\Gamma_*^S(A)$, otherwise set $\widehat{\Xi}^S(A)=\emptyset$.
	\item
	$\widehat{\Xi}^S(A)=\widehat{\Xi}^S(A)\cup
	\left(\bigcup_{i\in S,j\in\overline S}V_{ij}^S(A)\right)$
	\item
	Check  if there exist indices $k \in S ,\ell   \in \overline{S}$ such that $2|a_{k\ell}| > r_k^{\overline{S}} $ and  $2|a_{\ell k}|> r_\ell^{{S}} $ . 
	\item 
	If such indices exist, set	 $\widehat{\Xi}^S(A)=\widehat{\Xi}^S(A)\setminus \Upsilon_{k\ell}^S(A)$.
	\vspace*{0.2cm}	
	\item
	Plot $\widehat{\Xi}^S(A)$.
\end{enumerate}

\begin{example} \em
	Consider the $5 \times 5$ matrix
	\[
	A = 
	\left[\begin{array}{rrrrr}
		-11 & -3 & 1 & -2 & -1 \\
		-1 & -10 & 0 & -1 & 3 \\
		0 & 0 & 1 & 7 & -1 \\
		1 & 0 & -7 & 7 & 0 \\
		-1 & -1 & 0 & 3 & 12
	\end{array}\right]
	.\]
	\begin{figure}[htbp]
		\centering
		
		\begin{minipage}[t]{0.48\textwidth}
			\centering
			\includegraphics[width=\linewidth]{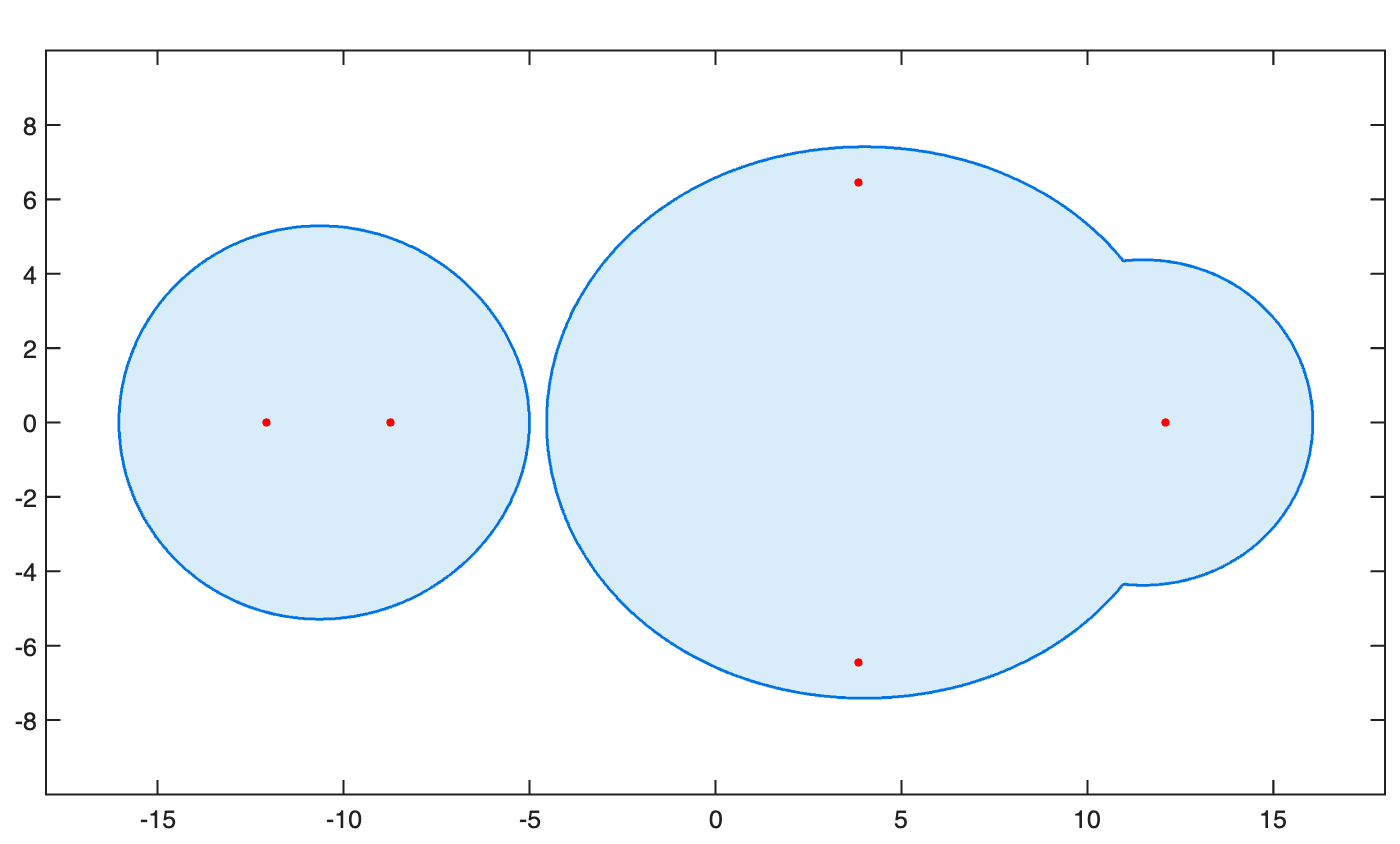}
			
			\captionof{figure}{CKV set $C^S(A)$, $S=\{1,2,3\}$.}
			\label{fig:ckvS}
		\end{minipage}
		\hfill
		\begin{minipage}[t]{0.48\textwidth}
			\centering
			\includegraphics[width=\linewidth]{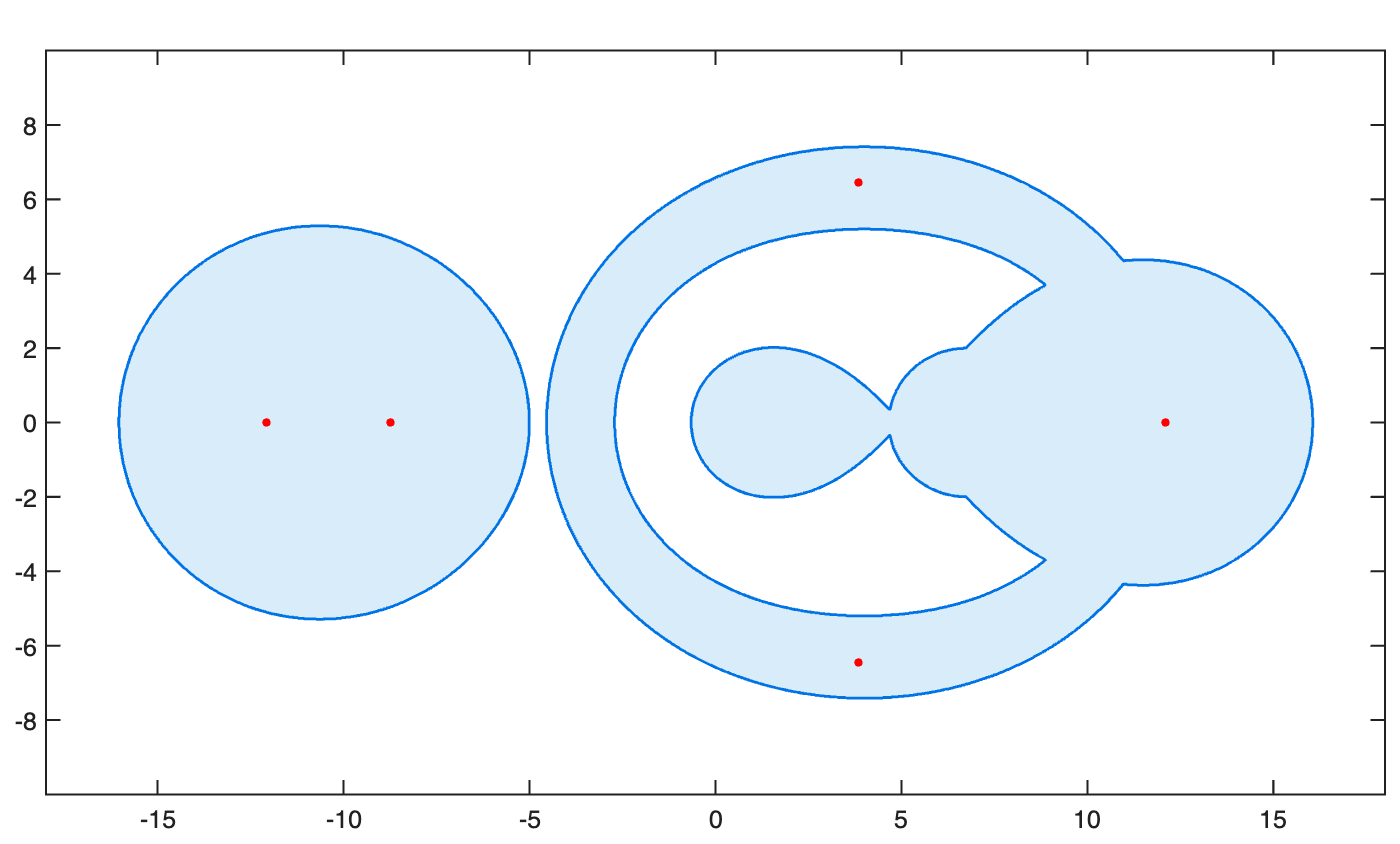}
			
			\captionof{figure}{CKV incl-excl set $\widehat{\Xi}^{S}(A)$, $S=\{1,2,3\}$.}
			\label{fig:EhatS}
		\end{minipage}
		
	\end{figure}

\begin{figure}[htbp]
	\centering
	
	\begin{minipage}[t]{0.48\textwidth}
		\centering
		\includegraphics[width=\linewidth]{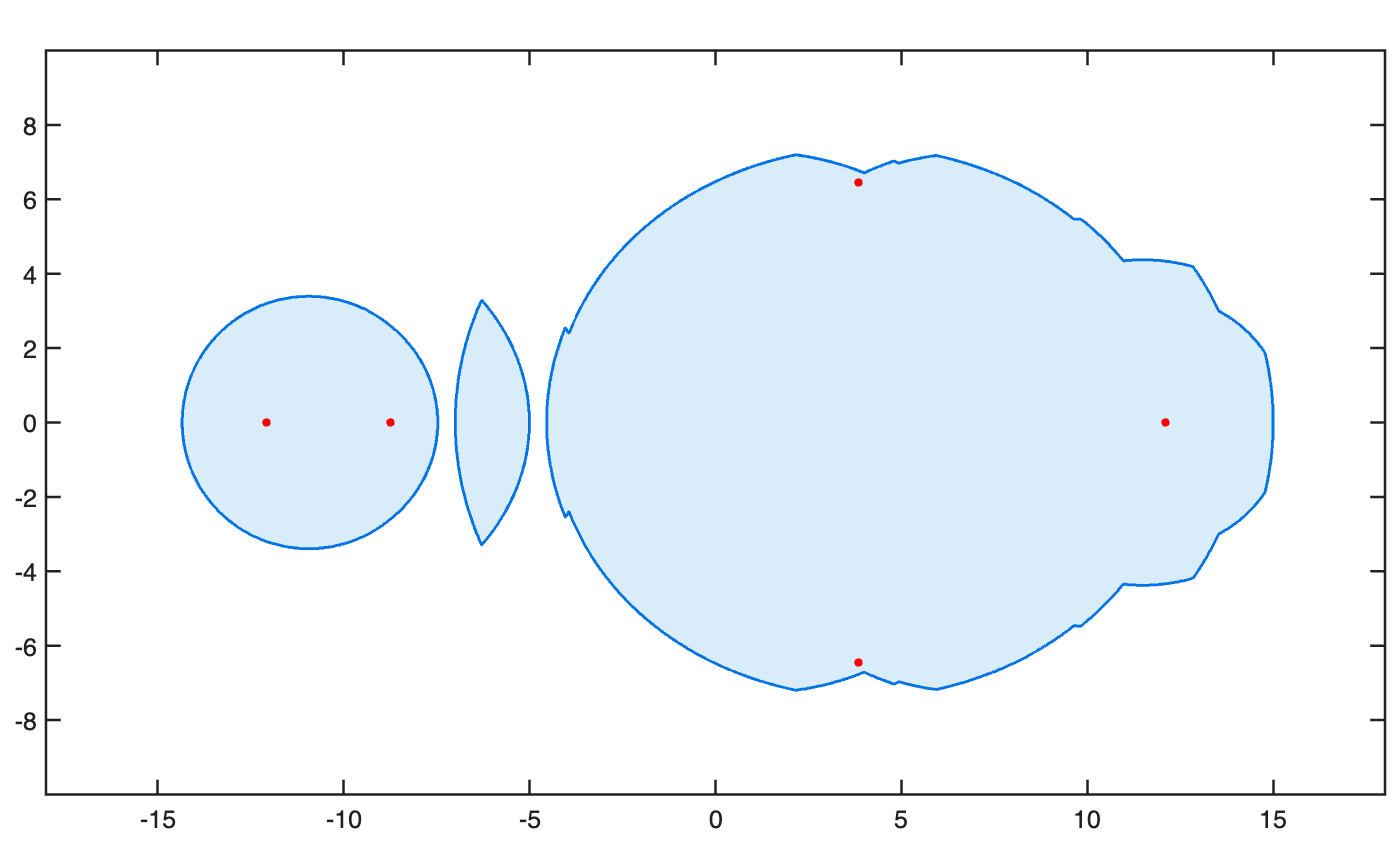}
		\captionof{figure}{CKV set $C(A)$.}
		\label{fig:ckv}
	\end{minipage}
	\hfill
	\begin{minipage}[t]{0.48\textwidth}
		\centering
		\includegraphics[width=\linewidth]{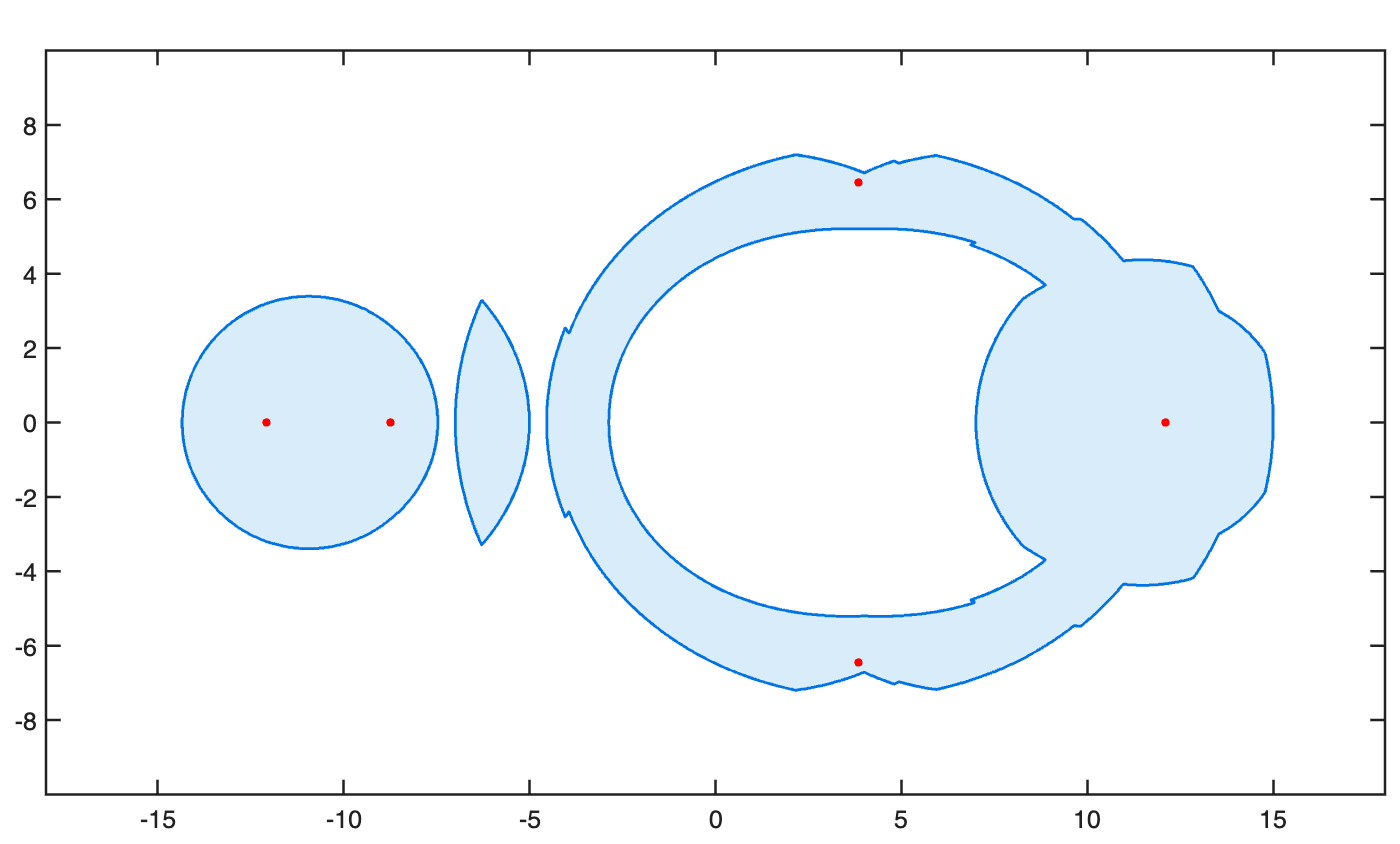}
		\captionof{figure}{CKV incl-excl set $\Xi(A)$.}
		\label{fig:E}
	\end{minipage}
	
\end{figure}

\end{example}


\section{Polynomial case}

We begin this section by providing some basic definitions and preliminary results for matrix polynomials. Subsequently, we generalize the $S-$SDD inclusion sets (defined for constant matrices in the Preliminaries) and the $S-$SDD inclusion-exclusion sets (defined in Section~\ref{sec:SSDDinclexcl}) to the case of matrix polynomials.

Consider an $n \times n$ matrix polynomial of the form
\begin{equation} \label{poly}
	P(\la) = A_m \la^m + A_{m-1} \la^{m-1} + \cdots + A_1 \la + A_0 ,
\end{equation}
where $\la$ is a complex variable, $A_0, A_1, \dots, A_m \in \mathbb{C}^{n \times n}$ with $A_m \neq 0$, and the determinant $\det P(\la)$ is not identically zero. Then, the set of all finite eigenvalues of $P(\la)$, defined by
\[
\sigma (P(\la))
= \left \{ \mu\in\Complex :\, \det P(\mu) = 0 \right \}
= \left \{ \mu\in\Complex :\, 0 \in \sigma (P(\mu)) \right \},
\]
where $\sigma (P(\mu))$ denotes the standard spectrum of the constant matrix $P(\mu)$, is the finite spectrum of $P(\la)$.
The algebraic multiplicity of an eigenvalue $\mu\in\sigma(P(\la))$ is defined as the multiplicity of $\mu$ as a root of the polynomial $\det P(\la)$, and is always at least the geometric multiplicity of $\mu$, which is given by the dimension of the null space of the matrix $P(\mu)$.
Recall that $\mu = \infty$ is an eigenvalue of $P(\la)$ if and only if $0$ is an eigenvalue of the reverse matrix polynomial $\widehat{P}(\la) = \la^m P(1/\la) = A_0 \la^m + A_1 \la^{m-1} + \cdots + A_{m-1} \la + A_m$, which is equivalent to the leading coefficient matrix $A_m$ being singular. In this case, the algebraic and geometric multiplicities of $\mu=\infty$ for $P(\la)$ are defined respectively as the algebraic and geometric multiplicities of $0$ for $\widehat{P}(\la)$.

Since the Ger\v sgorin, Brauer, and Dashnic-Zusmanovich sets for a matrix polynomial were introduced in \cite{MPP}, we only restate them briefly below before proceeding to the definition of the $S-$SDD sets.

Consider the matrix polynomial $P(\la)$ defined in (\ref{poly}) and the nonnegative functions
\begin{gather*}
	r_i(P(\lambda)) = \sum_{\substack{ j \in N \setminus \{i\} }} |(P(\lambda))_{ij}|, \quad i \in N, \\
	r_{t}^k (P(\lambda)) = r_{t} (P(\lambda)) - |(P(\lambda))_{tk}|, \quad k, t \in N, k \neq t .
\end{gather*}

\noindent The Ger\v sgorin inclusion-exclusion localization set is defined as
\begin{align*}
	& \Omega(P(\lambda)) = \bigcup_{i \in {N}} \Omega_i(P(\lambda)), \quad \text{where} \\[3pt]
	& \Omega_i(P(\lambda)) = \Gamma_i(P(\lambda)) \setminus  \Delta_i(P(\lambda)), \\[3pt]
	& \Gamma_i(P(\lambda)) = \big\{\mu \in \mathbb{C} : |(P(\mu))_{ii}| \le r_{i}(P(\mu))\big\}, \\[3pt]
	& \Delta_i(P(\lambda)) = \bigcup\limits_{ j \in N \setminus \{i\} } \Delta_{ij}(P(\la)), \\[3pt]
	& \Delta_{ij}(P(\la)) = \left \{ \mu  \in \mathbb{C} :\, |(P(\mu))_{jj}| < 2 |(P(\mu))_{ji}|- r_{j} (P(\mu)) \right \}.
\end{align*}

\noindent The Dashnic-Zusmanovich inclusion-exclusion localization set is defined as
\[
\Theta(P(\lambda)) = \bigcap_{i \in {N}} \bigcup_{j \in N\setminus\{i\}} \left( \mathcal{D}_{ij}(P(\lambda)) \setminus {\cal L}_{ij}(P(\lambda)) \right),
\]
\noindent where
\[ 	\mathcal{D}_{ij}(P(\lambda)) = \big\{\mu \in \mathbb{C} : |(P(\mu))_{ii}|\, \big(|(P(\mu))_{jj}| - r_{j}^i(P(\mu))\big) \le r_{i}(P(\mu))\, |(P(\mu))_{ji}|\big\},\]
\[ \begin{aligned}
	{\cal L}_{ij}(P(\lambda)) &= \big\{\mu \in \mathbb{C} : |(P(\mu))_{ii}|\, \big(|(P(\mu))_{jj}| + r_{j}^i(P(\mu))\big)< \\[0.2cm] 
	&\hspace*{5cm} < \big(|(P(\mu))_{ij}| - r_{i}^j(P(\mu))\big)\, |(P(\mu))_{ji}|\big\}.
\end{aligned}
\]
It is known that these sets contain all the eigenvalues of $P(\lambda)$ (see Theorems 10, 33, and Definition 43 in \cite{MPP}). 


\subsection{$S-$SDD inclusion sets}

\noindent For any nonempty $S \subseteq N$, we denote the complement of $S$ as $\overline{S} := N\setminus S$, and we define the sets 
$$r_{i}^S(P(\la))=\sum_{j \in S\setminus \{i\}}|(P(\la))_{ij}|, \qquad r_{i}^{\overline{S}}(P(\la))=\sum_{j \in {\overline{S}}\setminus \{i\}}|(P(\la))_{ij}|.$$
Then, in a similar way as in the Preliminaries, we introduce the sets
\[
	\begin{aligned}
		&\Gamma^{S}_{i}(P(\la)) = \left\{\mu \in \Complex :\, 0 \in \Gamma_i(P(\mu)) \right\}
		= \left\{\mu \in \Complex :\, |(P(\mu))_{ii}| \le r^{S}_i(P(\mu)) \right\} ,\\[2mm]
		&V^{S}_{ij}(P(\lambda)) = \left\{ \mu \in \mathbb{C} : 0 \in V^{S}_{ij}(P(\mu)) \right\} =\\[2mm]
		&= \Big\{ \mu \in \mathbb{C} : \left( |(P(\mu))_{ii}| - r^{S}_{i}(P(\mu)) \right) \left( |(P(\mu))_{jj}| - r^{\overline{S}}_{j}(P(\mu)) \right) \leq r^{\overline{S}}_{i}(P(\mu)) \; r^{S}_{j}(P(\mu)) \Big\},
	\end{aligned} 
	\]

\noindent and 
\begin{equation} \label{eq:CKVpoly}
	C^{S}(P(\lambda)) = \Bigg( \bigcup_{i\in S}
	\Gamma^{S}_{i} (P(\lambda)) \Bigg) \bigcup \Bigg( \bigcup_{i\in S, j\in
		\overline{S}} V^{S}_{ij} (P(\lambda)) \Bigg).
\end{equation}
The CKV inclusion set of  $P(\lambda)$ is defined as the intersection over all possible choices of $S$:  
\[
C(P(\lambda))=\bigcap_{\emptyset \ne S\subseteq N} C^S(P(\lambda)).
\] 

\begin{theorem}
	All (finite and infinite) eigenvalues of the matrix polynomial $P(\lambda)$ lie in the CKV set $C(P(\la))$.
\end{theorem}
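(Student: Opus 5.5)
The plan is to reduce the statement to the constant-matrix inclusion $\sigma(A)\subseteq C^S(A)\subseteq$ (see \cite{CKV}), applied pointwise to the matrices $P(\mu)$. The argument will be carried out separately for each fixed nonempty $S\subseteq N$, and the intersection over $S$ is then immediate. Note that for $S=N$ the complement $\overline S$ is empty. In that case (\ref{eq:CKVpoly}) reduces to $\bigcup_{i\in N}\Gamma_i^N(P(\la))$, which is the Ger\v sgorin set of $P(\la)$, so the statement is consistent with the known case; the argument below does not need to treat it separately.

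\emph{Finite eigenvalues.} Let $\mu\in\Complex$ with $\det P(\mu)=0$. Then $0\in\sigma(P(\mu))$, where $P(\mu)$ is a constant complex matrix. By the inclusion $\sigma(A)\subseteq C^S(A)$ (valid for every nonempty $S$), we get $0\in C^S(P(\mu))$. Hence either $0\in\Gamma_i^S(P(\mu))$ for some $i\in S$, or $0\in V_{ij}^S(P(\mu))$ for some $i\in S$, $j\in\overline S$. The first condition reads $|(P(\mu))_{ii}|\le r_i^S(P(\mu))$, and the second is exactly the defining inequality of $V_{ij}^S(P(\la))$ evaluated at $\mu$. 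Therefore $\mu\in C^S(P(\la))$ by the definitions preceding (\ref{eq:CKVpoly}). Since $S$ was arbitrary, $\mu\in C(P(\la))$.

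\emph{Infinite eigenvalue.} Here I will adopt the convention of \cite{MPP}: $\infty$ belongs to a localization set of $P(\la)$ exactly when $0$ belongs to the corresponding set of the reverse polynomial $\widehat P(\la)$. To justify this, I will check that the defining inequalities of $\Gamma_i^S$ and $V_{ij}^S$ are positively homogeneous in the matrix entries. The inequality for $\Gamma_i^S$ is homogeneous of degree one on both sides, and the one for $V_{ij}^S$ is homogeneous of degree two on both sides. Consequently, for $\mu\neq0$, the point $\mu$ lies in $C^S(P(\la))$ if and only if $1/\mu$ lies in $C^S(\widehat P(\la))$, because $\widehat P(1/\mu)=\mu^{-m}P(\mu)$. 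The convention is therefore the natural extension by continuity to the Riemann sphere. With this in place, $\infty\in\sigma(P(\la))$ means that $\widehat P(0)=A_m$ is singular. Applying the constant-matrix result to $A_m$ gives $0\in C^S(A_m)=C^S(\widehat P(0))$, so $0\in C^S(\widehat P(\la))$, i.e.\ $\infty\in C^S(P(\la))$ for every $S$.

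The main point needing care is this treatment of $\infty$: making precise the convention and the homogeneity check that legitimizes it. The finite case is a direct transfer of the matrix result.
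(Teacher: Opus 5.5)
Your proposal is correct and follows the paper's proof. For finite eigenvalues you apply the constant-matrix inclusion $\sigma(A)\subseteq C^S(A)$ to $P(\mu)$, and for $\mu=\infty$ you apply it to $\widehat P(0)=A_m$, reading membership of $\infty$ through the reverse polynomial. Your homogeneity check (degree one for $\Gamma_i^S$, degree two for $V_{ij}^S$) usefully makes explicit a chart compatibility that the paper leaves implicit. However, it justifies the convention only as a consistent definition on the Riemann sphere, not as an extension ``by continuity'', since $0$ can be an isolated point of $C^S(\widehat P(\lambda))$ while $C^S(P(\lambda))$ is bounded.
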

\begin{proof}
	Let \( \mu \) be a finite eigenvalue of \( P(\lambda) \). Then \( 0 \in \sigma(P(\mu)) \), and consequently \( 0 \in C(P(\mu)) \), which yields \( \mu \in C(P(\lambda)) \).
	
	If \( \mu = \infty \) is an eigenvalue of \( P(\lambda) \), then \( 0 \in \sigma(\widehat{P}(\mu)) \) and, by the same argument applied to the reverse matrix polynomial, \( 0 \in C(\widehat{P}(\mu)) \). Hence, \( \mu = \infty \) lies in \( C(P(\lambda)) \).
\end{proof}
By the definition of the Dashnic-Zusmanovich set in \cite{MP18}
$$
{\cal D}(P(\lambda))=\left\{\mu \in \mathbb{C} : 0 \in {\cal D}(P(\mu))\right\}=\bigcap_{i \in N} \bigcup_{j \in N\setminus\{i\}}{\cal D}_{ij}(P(\lambda)),
$$
it is easy to see that, \( C(P(\lambda)) \subseteq \mathcal{D}(P(\lambda)) \), for any matrix polynomial $P(\lambda)$. Indeed, a scalar \( \mu \in C \) lies in \( C(P(\lambda)) \) if and only if 0 lies in \( C(P(\mu)) \). Since \( C(P(\mu)) \subseteq {\cal D}(P(\mu))\) (see \cite{CKV}), \( \mu \) lies in $ \mathcal{D}(P(\lambda))$ .

Based on equation (\ref{eq:Wij}) for constant matrices, we may now define the corresponding set $W_{ij}^S$ for a matrix polynomial $P(\lambda)$ as follows:
\[
W_{ij}^S(P(\lambda)) := \left\{ \mu \in \mathbb{C} : 0 \in W_{ij}^S(P(\mu)) \right\}.
\]
Then, for every $\mu \in \mathbb{C}$, Lemma \ref{lemma:1} applied to the constant matrix $P(\mu)$ yields the equality
\[
\left( \bigcup_{i \in S} \Gamma_i^S(P(\mu)) \right)
\bigcup
\Bigg( \bigcup_{i \in S, j \in \overline{S}} V_{ij}^S(P(\mu)) \Bigg)
=
\bigcup_{i \in S, j \in \overline{S}} W_{ij}^S(P(\mu)).
\]
Consequently, for any proper subset $S$ of $N$, the set $C^S(P(\lambda))$, defined in (\ref{eq:CKVpoly}), satisfies
\[
C^S(P(\lambda)) = \Bigg\{ \mu \in \mathbb{C} : 0 \in \bigcup_{i \in S,j \in \overline{S}} W_{ij}^S(P(\mu)) \Bigg\}
= \bigcup_{i \in S,j \in \overline{S}} W_{ij}^S(P(\lambda)).
\]
Therefore, the CKV inclusion set for matrix polynomials can be equivalently expressed as
\[
C(P(\lambda)) = \Gamma(P(\lambda)) \bigcap \Bigg(\bigcap_{\emptyset \ne S \subsetneq N} \bigcup_{i \in S , j \in \overline{S}} W_{ij}^S(P(\lambda))\Bigg).
\]


\subsection{$S-$SDD inclusion-exclusion sets}

Next, we extend the notion of exclusion sets from the constant matrix case (introduced in Section~\ref{sec:SSDDinclexcl}) to matrix polynomials, in an attempt to improve the CKV inclusion set. To this end, we give the following definitions.

\noindent For any $i \in S$ and $j \in \overline{S}$, we define the exclusion set
\[
\Psi_{ij}^S(P(\lambda)) :=
\left\{
\mu \in \mathbb{C} : 0 \in \Psi_{ij}^S(P(\mu))
\right\},
\]
where $\Psi_{ij}^S(P(\mu))$ is given by the constant-matrix definition
in (\ref{eq:psi}).
Equivalently, we may write explicitly:
\[
\begin{aligned}
	&	\Psi_{ij}^S(P(\lambda)) =
	\Bigg\{
	\mu \in \mathbb{C} :\,
	\Big( |(P(\mu))_{ii}| + r_{i}^S(P(\mu)) \Big)
	\Big( |(P(\mu))_{jj}| + r_{j}^{\overline{S}}(P(\mu)) \Big)< \\
	&<
	\Big( \max\big\{0,\, 2|(P(\mu))_{ij}| - r_{i}^{\overline{S}}(P(\mu)) \big\} \Big)
	\Big( \max\big\{0,\, 2|(P(\mu))_{ji}| - r_{j}^S(P(\mu)) \big\} \Big)
	\Bigg\}.
\end{aligned}
\]
For any nonempty $S \subseteq N$, we define the corresponding inclusion-exclusion set for the matrix polynomial $P(\lambda)$ as
\[
\Xi^S(P(\lambda)) :=
\left\{
\begin{array}{lll}
	\displaystyle \bigcup_{i \in S , j \in \overline{S}}
	\Big( W_{ij}^S(P(\lambda)) \setminus \Psi_{ij}^S(P(\lambda)) \Big) & \text{if} & \emptyset \neq S \subsetneq N ,\\[0.5cm]
	\Omega(P(\lambda)) & \text{if} & S = N.
\end{array}
\right.
\]
Then the CKV inclusion-exclusion set for the matrix polynomial $P(\lambda)$ is obtained from the intersection over all possible choices of S:
\[
\Xi(P(\lambda)) := \bigcap_{\emptyset \neq S \subseteq N} \Xi^S(P(\lambda)).
\]
\begin{theorem}
	All (finite and infinite) eigenvalues of $P(\lambda)$ lie in the CKV inclusion-exclusion set $\Xi(P(\lambda))$.
\end{theorem}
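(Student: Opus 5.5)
The plan is to reduce the statement pointwise to the constant-matrix result, Corollary \ref{posl}, exactly as in the proof of the preceding theorem for $C(P(\lambda))$. The key observation is that every set involved in $\Xi^S(P(\lambda))$ is defined by the rule ``$\mu$ belongs to the polynomial set if and only if $0$ belongs to the corresponding set of the constant matrix $P(\mu)$''. For $W_{ij}^S$ and $\Psi_{ij}^S$ this holds by definition. For $\Omega(P(\lambda))$ it follows directly from the displayed definitions of $\Gamma_i(P(\lambda))$ and $\Delta_{ij}(P(\lambda))$: substituting $z=0$ into the definitions of $\Gamma_i(A)$ and $\Delta_{ik}(A)$ with $A=P(\mu)$ gives exactly the defining inequalities.

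Since set difference, union and intersection commute with the map $\mu\mapsto$ ``condition on $0$ for $P(\mu)$'', we obtain for every nonempty $S\subseteq N$
\[
\Xi^S(P(\lambda))=\left\{\mu\in\mathbb C:\ 0\in \Xi^S(P(\mu))\right\},
\]
and consequently
\[
\Xi(P(\lambda))=\left\{\mu\in\mathbb C:\ 0\in \Xi(P(\mu))\right\}.
\]

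Now let $\mu$ be a finite eigenvalue. Then $\det P(\mu)=0$, so $0\in\sigma(P(\mu))$. By Corollary \ref{posl} applied to the constant matrix $P(\mu)$ for each nonempty $S$, we get $0\in\Xi^S(P(\mu))$ for all $S$. Hence $0\in\Xi(P(\mu))$, that is, $\mu\in\Xi(P(\lambda))$.

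For $\mu=\infty$, I follow the convention used in the previous theorem: $\infty\in\Xi(P(\lambda))$ is understood as $0\in\Xi(\widehat P(0))=\Xi(A_m)$. If $\infty$ is an eigenvalue, then $A_m=\widehat P(0)$ is singular, so $0\in\sigma(A_m)$. Corollary \ref{posl} then gives $0\in\Xi^S(A_m)$ for all $S$, which is the required statement.

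The only step needing care is the identification for $S=N$: one must check that the strict inequality in $\Delta_{ij}$ and the membership in $\Gamma_i$ translate exactly under $z=0$, so that the polynomial $\Omega(P(\lambda))$ equals $\{\mu:0\in\Omega(P(\mu))\}$. Everything else is the routine transfer, which is why I expect no real obstacle beyond stating the infinite-eigenvalue convention precisely.
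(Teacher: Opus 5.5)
Your proposal is correct and follows essentially the same route as the paper. Both arguments apply Corollary~\ref{posl} (equivalently Theorem~\ref{glavna} for proper $S$) to the constant matrix $P(\mu)$, use $\mu\in\Xi(P(\lambda))\iff 0\in\Xi(P(\mu))$, and handle $\infty$ through the reverse polynomial at $0$, i.e.\ $A_m$. Your explicit check that this equivalence holds term by term, including the $S=N$ case $\Omega(P(\lambda))$, fills in what the paper only asserts ``by the definition of the polynomial sets.''
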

\begin{proof}
	We first consider a finite eigenvalue $\mu \in \sigma(P(\lambda))$. By definition, $0 \in \sigma(P(\mu))$, i.e. $0$ is an eigenvalue of the constant matrix $P(\mu)$.
	
	For any nonempty subset $S \subseteq N$, we apply the corresponding constant-matrix result:
	\begin{itemize}
		\item If $S = N$, then by Corollary~\ref{posl} applied to the constant matrix $P(\mu)$, we have
		\[0 \in \Xi^{N}(P(\mu)) = \Omega(P(\mu)).\]
		\item If $\emptyset \neq S \subsetneq N$, then by Theorem~\ref{glavna} applied to the constant matrix $P(\mu)$, there exist indices $i \in S$ and $j \in \overline{S}$ such that
		\[0 \in W_{ij}^S(P(\mu)) \setminus \Psi_{ij}^S(P(\mu)).\]
	\end{itemize}
	
	\noindent	In either case, for every $S \subseteq N$ with $S \neq \emptyset$, we have $0 \in \Xi^S(P(\mu))$. Hence, ${\displaystyle 0 \in \bigcap_{\emptyset \neq S \subseteq N} \Xi^S(P(\mu)) = \Xi(P(\mu)).}$
	
	\noindent	Now, by the definition of the polynomial sets, for any $\mu \in \mathbb{C}$, we have the equivalence
	\[
	\mu \in \Xi^S(P(\lambda)) \quad \Longleftrightarrow \quad 0 \in \Xi^S(P(\mu)),
	\]
	and similarly
	\[
	\mu \in \Xi(P(\lambda)) \quad \Longleftrightarrow \quad 0 \in \Xi(P(\mu)).
	\]
	Therefore, since $0 \in \Xi(P(\mu))$, it follows that $\mu \in \Xi(P(\lambda))$.
	
	Finally, if $\mu = \infty$ is an eigenvalue of $P(\lambda)$, then $0 \in \sigma(\widehat{P}(\mu))$, where $\widehat{P}(\lambda)$ is the reverse matrix polynomial. Applying the same argument to $\widehat{P}(\lambda)$ yields $\infty \in \Xi(P(\lambda))$.
	
	Thus, all eigenvalues of $P(\lambda)$ lie in $\Xi(P(\lambda))$.
\end{proof}

Before discussing some geometric properties of the proposed CKV inclusion-exclusion set, we present an illustrative example to compare it with the existing inclusion-exclusion sets. In this example, the new set is tighter than both the Ger\v sgorin inclusion-exclusion set and the Dashnic–Zusmanovich inclusion-exclusion set, and offers a noticeable improvement over the standard CKV inclusion set.

\begin{example} \em \label{P17}
	For the $5 \times 5$ matrix polynomial
	\[
	P(\lambda) = 
	\left[
	\begin{array}{ccccc}
		\lambda^2+11 & 3 & -1 & 2 & 1 \\
		1 & \lambda^2+10 & 0 & 1 & -3 \\
		0 & 0 & \lambda^2-1 & -7 & 1 \\
		-1 & 0 & 7 & \lambda^2-7 & 0 \\
		1 & 1 & 0 & -3 & \lambda-12
	\end{array}
	\right]
	\]
	the following figures show all above-mentioned localization sets.
\begin{figure}[!ht]
	\centering
	
	\begin{minipage}[t]{0.38\textwidth}
		\centering
		\includegraphics[width=\linewidth]{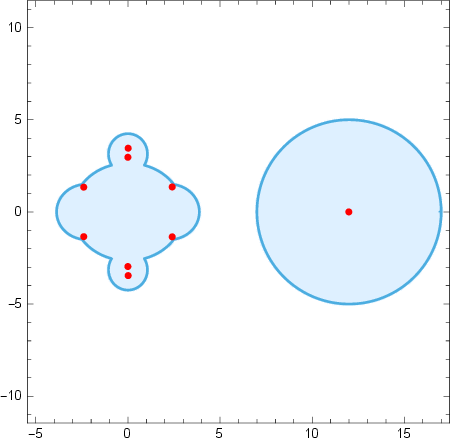}
		\captionof{figure}{Ger\v sgorin set of $P(\lambda)$.}
		\label{fig:p17_Gerg}
	\end{minipage}
\hspace{0.03\textwidth}
	\begin{minipage}[t]{0.38\textwidth}
		\centering
		\includegraphics[width=\linewidth]{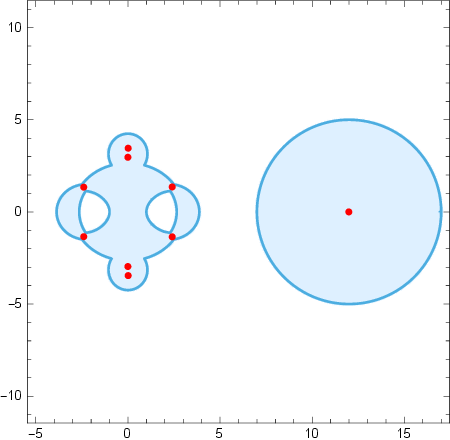}
		\captionof{figure}{Ger\v sgorin incl-excl set of $P(\lambda)$.}
		\label{fig:p17_GergExcl}
	\end{minipage}
	
	\vspace{0.1em}
	
	\begin{minipage}[t]{0.38\textwidth}
		\centering
		\includegraphics[width=\linewidth]{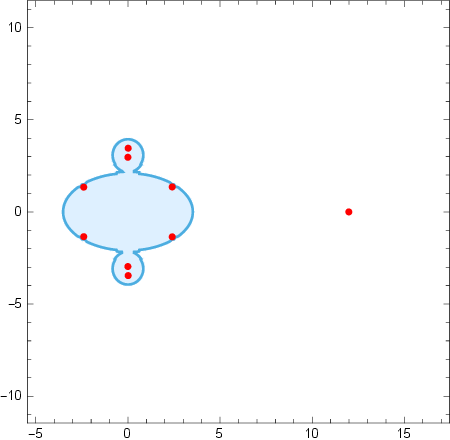}
		\captionof{figure}{DZ set of $P(\lambda)$.}
		\label{fig:p17_dz}
	\end{minipage}
\hspace{0.03\textwidth}
	\begin{minipage}[t]{0.38\textwidth}
		\centering
		\includegraphics[width=\linewidth]{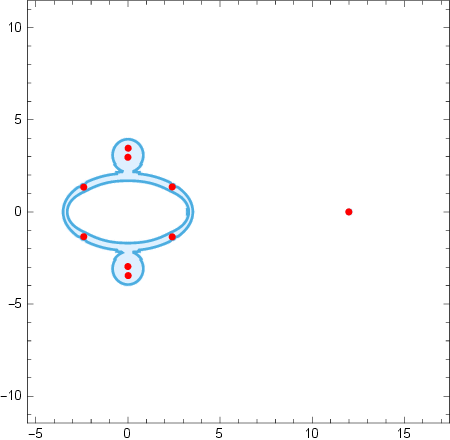}
		\captionof{figure}{DZ incl-excl set of $P(\lambda)$.}
		\label{fig:p17_dzexcl}
	\end{minipage}
	
	\vspace{0.1em}
	
	\begin{minipage}[t]{0.38\textwidth}
		\centering
		\includegraphics[width=\linewidth]{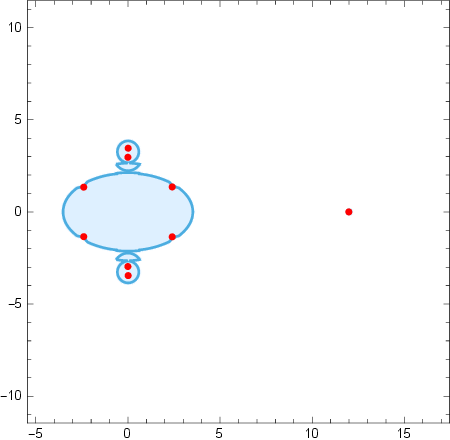}
		\captionof{figure}{CKV set of $P(\lambda)$.}
		\label{fig:p17_ckv}
	\end{minipage}
\hspace{0.03\textwidth}
	\begin{minipage}[t]{0.38\textwidth}
		\centering
		\includegraphics[width=\linewidth]{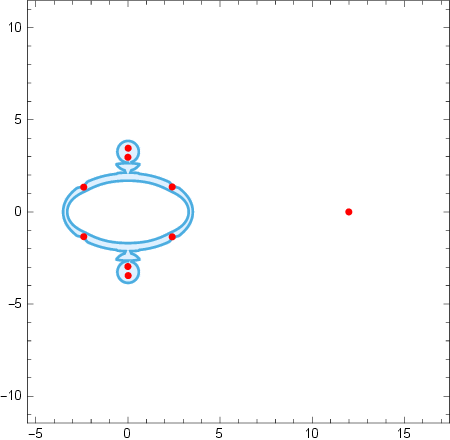}
		\captionof{figure}{CKV incl-excl set of $P(\lambda)$.}
		\label{fig:p17_ckvexcl}
	\end{minipage}
	
\end{figure}
\end{example}

\subsection{Some geometric properties of the CKV inclusion-exclusion set}

\begin{proposition}\label{prop:Psi_unbounded}
	Let $P(\lambda)$ be an $n \times n$ matrix polynomial 
	of degree m, and let $S$ be a proper nonempty subset of $N$.
	Suppose that there exists a pair $(i,j) \in S \times \overline{S}$ (necessarily unique) satisfying
	\begin{equation}\label{eq:exclusion_condition_Am}
		2|(A_m)_{ij}| > r_{i}^{\overline{S}}(A_m) 
		\quad\text{and}\quad 
		2|(A_m)_{ji}| > r_{j}^{S}(A_m).
	\end{equation}
	\begin{enumerate}[label=(\roman*)]
		\item If \, $0 \in \Psi_{ij}^S(A_m)$, then the set \(\Psi_{ij}^S(P(\lambda))\) is unbounded.
		\item If \, \(|(A_m)_{ii}| + r_{i}^{S}(A_m) = 0 \;\; \text{or} \;\;
		|(A_m)_{jj}| + r_{j}^{\overline S}(A_m) = 0\), then \(0\in \Psi_{ij}^S(A_m)\) and the set \(\Psi_{ij}^S(P(\lambda))\) is unbounded.
	\end{enumerate}
\end{proposition}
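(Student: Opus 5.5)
The plan is a homogeneity argument for large $|\mu|$. Write $P(\mu)=\mu^m\big(A_m+E(\mu)\big)$ with $E(\mu)=\sum_{k<m}A_k\mu^{k-m}$, so that $E(\mu)\to 0$ entrywise as $|\mu|\to\infty$. All quantities in the definition of $\Psi_{ij}^S$ are positively homogeneous of degree one in the matrix: $|(P(\mu))_{ii}|$, the sums $r_i^S(P(\mu))$ and $r_j^{\overline S}(P(\mu))$, and the terms $2|(P(\mu))_{ij}|-r_i^{\overline S}(P(\mu))$, including after taking $\max\{0,\cdot\}$. Hence both sides of the strict inequality defining $\Psi_{ij}^S(P(\lambda))$ scale by $|\mu|^{2m}$. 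For $\mu\neq 0$ this gives
\[
\mu\in\Psi_{ij}^S(P(\lambda)) \iff 0\in\Psi_{ij}^S\big(A_m+E(\mu)\big).
\]

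\emph{Part (i).} Define the continuous function
\[
F(B):=\Big(\max\{0,2|b_{ij}|-r_i^{\overline S}(B)\}\Big)\Big(\max\{0,2|b_{ji}|-r_j^{S}(B)\}\Big)-\Big(|b_{ii}|+r_i^S(B)\Big)\Big(|b_{jj}|+r_j^{\overline S}(B)\Big)
\]
on $\mathbb C^{n\times n}$. By definition, $0\in\Psi_{ij}^S(B)$ exactly when $F(B)>0$. The hypothesis $0\in\Psi_{ij}^S(A_m)$ says $F(A_m)>0$. By continuity there is $\varepsilon>0$ such that $F(B)>0$ whenever $\max_{p,q}|b_{pq}-(A_m)_{pq}|<\varepsilon$. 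Choose $R$ so that $|\mu|>R$ forces $\|E(\mu)\|_{\max}<\varepsilon$; this is possible since each entry of $E(\mu)$ is bounded by $\sum_{k<m}\|A_k\|_{\max}|\mu|^{k-m}$. Then every $\mu$ with $|\mu|>R$ lies in $\Psi_{ij}^S(P(\lambda))$. So the set contains the exterior of a disk and is unbounded. Here condition \eqref{eq:exclusion_condition_Am} is only what makes $F(A_m)>0$ possible; uniqueness of the pair was already shown in Section 5.

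\emph{Part (ii).} It suffices to show $0\in\Psi_{ij}^S(A_m)$ and then invoke (i). Take the first alternative, $|(A_m)_{ii}|+r_i^S(A_m)=0$; the other is symmetric. Then the left-hand product in the definition of $\Psi_{ij}^S(A_m)$ at $z=0$ equals $0\cdot\big(|(A_m)_{jj}|+r_j^{\overline S}(A_m)\big)=0$. By \eqref{eq:exclusion_condition_Am} both maxima on the right are strictly positive, so the right-hand side is $>0$. Hence the strict inequality holds and $0\in\Psi_{ij}^S(A_m)$.

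The only delicate point is the scaling step in (i). I must check that $\max\{0,\cdot\}$ commutes with multiplication by the positive scalar $|\mu|^m$, which it does. I must also use $\mu\neq 0$ when dividing, which is harmless since only large $|\mu|$ matter. Everything else is continuity of $F$, which follows because it is built from absolute values, sums, products and maxima.
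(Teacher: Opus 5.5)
Your proof is correct and is essentially the paper's argument: your matrix $A_m+E(\mu)=\mu^{-m}P(\mu)$ is exactly the reversal $\widehat{P}(1/\mu)$. The paper likewise uses continuity of the defining inequality at $\widehat{P}(0)=A_m$, then cancels the factor $|\mu|^{2m}$, and handles part (ii) the same way you do. Keeping the $\max\{0,\cdot\}$ terms inside your continuous function $F$ is slightly cleaner: it avoids the paper's tacit step that $2|(\widehat{P}(\widehat{\mu}))_{ij}|-r_i^{\overline S}(\widehat{P}(\widehat{\mu}))$ and its $(j,i)$ counterpart stay positive for $\widehat{\mu}$ near $0$.
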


\begin{proof}
	
	First, observe that if condition \eqref{eq:exclusion_condition_Am} fails, $\Psi_{ij}^S(A_m)=\emptyset$.
	
	Conversely, when \eqref{eq:exclusion_condition_Am} holds, both max terms are strictly positive and simplify to:
	\[
	\max\big\{0, 2|(A_m)_{ij}| - r_{i}^{\overline{S}}(A_m)\big\} = 2|(A_m)_{ij}| - r_{i}^{\overline{S}}(A_m) > 0,
	\]
	\[
	\max\big\{0, 2|(A_m)_{ji}| - r_{j}^{S}(A_m)\big\} = 2|(A_m)_{ji}| - r_{j}^{S}(A_m) > 0.
	\]
	Now recall that \(\widehat{P}(\lambda) = A_0\lambda^m + \cdots + A_m \) and if \(\lambda \neq 0\), \(\widehat{P}(\lambda) = \lambda^m P(\frac{1}{\lambda})\). For \(\lambda = 0\), it follows that \(\widehat{P}(0)=A_m\).
	
	\begin{enumerate}[label=(\roman*)]
		\item	Since \(0 \in \Psi_{ij}^S(A_m)\), \(0 \in \Psi_{ij}^S(\widehat{P}(0))\), that is,
		\[
		\begin{aligned}
			&\left( |(\widehat{P}(0))_{ii}| + r_{i}^S(\widehat{P}(0)) \right)
			\left( |(\widehat{P}(0))_{jj}| + r_{j}^{\overline{S}}(\widehat{P}(0)) \right)<\\
			&<
			\left( 2|(\widehat{P}(0))_{ij}| - r_{i}^{\overline{S}}(\widehat{P}(0)) \right)
			\left( 2|(\widehat{P}(0))_{ji}| - r_{j}^S(\widehat{P}(0)) \right).
		\end{aligned}\]
				By continuity, there exists a real number \(r > 0\) such that for every \(\widehat{\mu} \in \mathbb{C}\) with \(|\widehat{\mu}| \le r\),
		\[
		\begin{aligned}
			&	\left( |(\widehat{P}(\widehat{\mu}))_{ii}| + r_{i}^S(\widehat{P}(\widehat{\mu})) \right)
			\left( |(\widehat{P}(\widehat{\mu}))_{jj}| + r_{j}^{\overline{S}}(\widehat{P}(\widehat{\mu})) \right)<\\ & < \left( 2|(\widehat{P}(\widehat{\mu}))_{ij}| - r_{i}^{\overline{S}}(\widehat{P}(\widehat{\mu})) \right)
			\left( 2|(\widehat{P}(\widehat{\mu}))_{ji}| - r_{j}^S(\widehat{P}(\widehat{\mu})) \right).
		\end{aligned}\]	
		Taking \(\mu = 1/\widehat{\mu}\), we have \(|\mu| \ge r^{-1}\) and \(\widehat{P}(\widehat{\mu}) = \widehat{\mu}^m P(\frac{1}{\widehat{\mu}})\). Substituting \(\widehat{P}(\widehat{\mu}) = \widehat{\mu}^m P(\mu)\) into the inequality and cancelling the common factor \(|\widehat{\mu}|^{2m} > 0\), we obtain
		\[
		\begin{aligned}
			&\left( |(P(\mu))_{ii}| + r_{i}^S(P(\mu)) \right)
			\left( |(P(\mu))_{jj}| + r_{j}^{\overline{S}}(P(\mu)) \right)<\\  &< \left( 2|(P(\mu))_{ij}| - r_{i}^{\overline{S}}(P(\mu)) \right)
			\left( 2|(P(\mu))_{ji}| - r_{j}^S(P(\mu)) \right).
		\end{aligned}\]
		Consequently, \(\{ \mu \in \mathbb{C} : |\mu| \geq r^{-1} \} \subseteq \Psi_{ij}^S( P(\lambda))\). Therefore, the exclusion set \(\Psi_{ij}^S( P(\lambda))\) is unbounded.\\
		
		\item
		Assume without loss of generality that
		\[
		|(A_m)_{ii}| + r_{i}^{S}(A_m) = 0.
		\]
		Then the left-hand side of the definition of \(0\in \Psi_{ij}^S(A_m)\) is
		\[
		\left(|(A_m)_{ii}| + r_{i}^{S}(A_m)\right)
		\left(|(A_m)_{jj}| + r_{j}^{\overline S}(A_m)\right) = 0.
		\]
		By assumption \eqref{eq:exclusion_condition_Am}, the right-hand side is strictly positive. Hence,
		\[
		0 < \left(2|(A_m)_{ij}| - r_{i}^{\overline S}(A_m)\right)
		\left(2|(A_m)_{ji}| - r_{j}^S(A_m)\right),
		\]
		which is precisely the condition that \(0\in \Psi_{ij}^S(A_m)\).
		
	\noindent The unboundedness of \(\Psi_{ij}^S(P(\lambda))\) then follows immediately from part (i).

	\end{enumerate}
\end{proof}

\begin{proposition}\label{prop:PsiUnbounded2}
	If the set $\Psi_{ij}^S(P(\lambda))$ is unbounded for some $i\in S$, $j\in \overline{S}$,
	then $0$ lies in the closure of $\Psi_{ij}^S(A_m)$.
\end{proposition}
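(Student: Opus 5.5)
We argue via the reverse polynomial $\widehat{P}(\lambda)=\lambda^m P(1/\lambda)$ and a sequence argument, reversing the continuity step in Proposition~\ref{prop:Psi_unbounded}(i). Since $\Psi_{ij}^S(P(\lambda))$ is unbounded, there is a sequence $\mu_k\in\Psi_{ij}^S(P(\lambda))$ with $|\mu_k|\to\infty$; we may assume $\mu_k\neq 0$. Put $\widehat{\mu}_k:=1/\mu_k\to 0$. The defining inequality of $0\in\Psi_{ij}^S(P(\mu_k))$ is homogeneous of degree two in the entries of the matrix. The left side is a product of two linear combinations of absolute values. On the right, each factor $\max\{0,\cdot\}$ is positively homogeneous of degree one. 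Multiplying by $|\widehat{\mu}_k|^{2m}>0$ and using $\widehat{P}(\widehat{\mu}_k)=\widehat{\mu}_k^m P(\mu_k)$ therefore gives $0\in\Psi_{ij}^S(\widehat{P}(\widehat{\mu}_k))$ for every $k$.

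Next we pass to the limit. The maps $M\mapsto (|M_{ii}|+r_i^S(M))(|M_{jj}|+r_j^{\overline S}(M))$ and $M\mapsto \max\{0,2|M_{ij}|-r_i^{\overline S}(M)\}\max\{0,2|M_{ji}|-r_j^S(M)\}$ are continuous. Also $\widehat{P}(\widehat{\mu}_k)\to\widehat{P}(0)=A_m$. The strict inequality only survives as a non-strict one in the limit: $(|(A_m)_{ii}|+r_i^S(A_m))(|(A_m)_{jj}|+r_j^{\overline S}(A_m))\le \max\{\cdots\}\max\{\cdots\}$ evaluated at $A_m$.

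The remaining task is to interpret this correctly; this is the main obstacle. Here $\Psi_{ij}^S(A_m)$ is a subset of $\mathbb{C}$, namely the $z$ with $(|z-(A_m)_{ii}|+r_i^S)(|z-(A_m)_{jj}|+r_j^{\overline S})<(\ldots)(\ldots)$. Saying $0$ lies in its closure means there are points $z\to 0$ satisfying the strict inequality. We use the points $z_k:=$ the element whose membership encodes $0\in\Psi_{ij}^S(\widehat P(\widehat\mu_k))$.

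To make this work, we write $\widehat{P}(\widehat{\mu}_k)=A_m+E_k$ with $E_k\to 0$. The diagonal perturbation of $E_k$ can be absorbed as a shift $z$. The off-diagonal perturbation changes the radii $r$ and the terms $|M_{ij}|,|M_{ji}|$ by $o(1)$.

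One then handles two cases. If the limiting inequality at $z=0$ is strict, then $0\in\Psi_{ij}^S(A_m)$ and we are done. If it is an equality, we must show that points arbitrarily close to $0$ satisfy the strict inequality. For this we first try to exploit that the right-hand side is positive: the $\mu_k$ lie in the set, so both max-terms are eventually positive along the sequence, and by passing to a subsequence we may assume they stay bounded away from degeneracy. The left-hand side, viewed as a function of $z$, is a product of two functions $|z-a|+c$. These can be decreased strictly by moving $z$ toward $(A_m)_{ii}$ or $(A_m)_{jj}$, unless both radii are $0$ and $z$ already equals both centers. Moving $z$ slightly from $0$ in that direction then produces points of $\Psi_{ij}^S(A_m)$ arbitrarily close to $0$. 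The degenerate sub-case, where the left side cannot be decreased, would be excluded by the strict inequality along the sequence together with continuity. This gives $0\in\overline{\Psi_{ij}^S(A_m)}$.
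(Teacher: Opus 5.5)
\textbf{There is a genuine gap in your last paragraph, and it cannot be closed.} Your first two paragraphs are the paper's proof: the paper divides by $|\mu_\ell|^{2m}$ directly rather than going through $\widehat P$, but the computation is the same. They correctly yield the non-strict inequality at $z=0$ for $A_m$. The paper stops there and asserts that this is ``precisely the condition that $0$ belongs to the closure of $\Psi_{ij}^S(A_m)$''. In other words, it tacitly reads the closure as the set cut out by the non-strict inequality. You rightly flag the passage to a genuine topological closure as the main obstacle. However, under that literal reading the statement is false, so your argument for that step must fail.

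Your claim that $f(z)=(|z-a_{ii}|+r_i^S)(|z-a_{jj}|+r_j^{\overline S})$ can always be strictly decreased by moving $z$ toward a centre is wrong. If $r_i^S<|a_{ii}-a_{jj}|+r_j^{\overline S}$, then $z=a_{ii}$ is a strict local minimum of $f$. Take $S=\{1,2\}$, $i=1$, $j=3$ and
\[
P(\lambda)=\begin{bmatrix} 0&\lambda&2\lambda+1\\ 0&\lambda&0\\ \lambda&0&2\lambda\end{bmatrix}.
\]
Then $\Psi_{13}^S(P(\lambda))=\{\mu\neq0:\ 2|\mu|<|2\mu+1|\}=\{\mu\neq 0:\ \mathrm{Re}\,\mu>-1/4\}$ is unbounded, and $A_1$ even satisfies \eqref{eq:exclusion_condition_Am}. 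On the other hand, $\Psi_{13}^S(A_1)=\{z:(|z|+1)|z-2|<2\}$, and $(|z|+1)|z-2|\ge(1+|z|)(2-|z|)>2$ for $0<|z|<1$. So equality holds at $z=0$, yet $0\notin\overline{\Psi_{13}^S(A_1)}$.

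The ``degenerate sub-case'' is not excluded either. Positivity of the max-terms along the sequence does not keep their normalized values away from $0$ in the limit. For $n=2$, $S=\{1\}$ and $P(\lambda)=\begin{bmatrix}1&\lambda\\ \lambda^2&\lambda^2\end{bmatrix}$, one gets $\Psi_{12}^S(P(\lambda))=\{\mu:|\mu|>1\}$. But $\Psi_{12}^S(A_2)=\{z:|z||z-1|<0\}=\emptyset$.

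Absorbing the diagonal part of $E_k$ into a shift $z$ also does not work. In general $(E_k)_{ii}\neq(E_k)_{jj}$, while $\Psi_{ij}^S$ uses a single $z$ for both centres. The off-diagonal part of $E_k$ also still perturbs the right-hand side.

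What your limit argument (and the paper's) actually proves is that $z=0$ satisfies the non-strict version of the defining inequality of $\Psi_{ij}^S(A_m)$. You should stop after your second paragraph and state the conclusion in exactly that form. That is the only sense in which ``$0$ lies in the closure'' can be established here.
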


\begin{proof}
	Assume that $\Psi_{ij}^S(P(\lambda))$ is unbounded. Then there exists a sequence
	$\{\mu_\ell\}_{\ell=1}^{\infty}$ in $\Psi_{ij}^S(P(\lambda))$ such that $|\mu_\ell|\to\infty$.
	For every $\ell$, by the definition of $\Psi_{ij}^S$, we have	
	\begin{equation}\label{eq:1Psiinequality}
		\begin{aligned}
			&\left( |(P(\mu_\ell))_{ii}| + r_{i}^{S}(P(\mu_\ell)) \right)
			\left( |(P(\mu_\ell))_{jj}| + r_{j}^{\overline{S}}(P(\mu_\ell)) \right)<\\ &< \left( 2|(P(\mu_\ell))_{ij}| - r_{i}^{\overline{S}}(P(\mu_\ell)) \right)
			\left( 2|(P(\mu_\ell))_{ji}| - r_{j}^{S}(P(\mu_\ell)) \right).
		\end{aligned}
	\end{equation}
	Since $\displaystyle P(\mu_\ell)=\sum_{k=0}^m A_k \mu_\ell^k$, we can write $\displaystyle
	(P(\mu_\ell))_{ij} = \mu_\ell^m \sum_{k=0}^m (A_k)_{ij} \mu_\ell^{k-m}.$ Similarly,
	\(\displaystyle
	r_{i}^T(P(\mu_\ell)) = |\mu_\ell|^m \, r_{i}^T\!\left(\sum_{k=0}^m A_k \mu_\ell^{k-m}\right),
	\)
	for any subset $T\subseteq N$.
	
	Dividing both sides of \eqref{eq:1Psiinequality} by $|\mu_\ell|^{2m}>0$, we obtain the equivalent inequality
 \begin{equation}\label{eq:2}
			\begin{aligned}
				&\Bigg( \Big|\sum_{k=0}^m (A_k)_{ii} \mu_\ell^{k-m}\Big| + r_{i}^{S}\!\Big(\sum_{k=0}^m A_k \mu_\ell^{k-m}\Big) \Bigg)
				\Bigg( \Big|\sum_{k=0}^m (A_k)_{jj} \mu_\ell^{k-m}\Big| + r_{j}^{\overline{S}}\!\Big(\sum_{k=0}^m A_k \mu_\ell^{k-m}\Big) \Bigg)< \\
				&< \Bigg( 2\Big|\sum_{k=0}^m (A_k)_{ij} \mu_\ell^{k-m}\Big| - r_{i}^{\overline{S}}\!\Big(\sum_{k=0}^m A_k \mu_\ell^{k-m}\Big) \Bigg)
				\Bigg( 2\Big|\sum_{k=0}^m (A_k)_{ji} \mu_\ell^{k-m}\Big| - r_{j}^{S}\!\Big(\sum_{k=0}^m A_k \mu_\ell^{k-m}\Big) \Bigg).
			\end{aligned}
	\end{equation}
	\noindent	Passing to the limit as $\ell\to\infty$ in \eqref{eq:2}, and using the fact that $\mu_\ell^{k-m}\to 0$ for every $k<m$ while $\mu_\ell^{m-m}=1$, we get	
	\begin{equation}\label{eq:3}
		\begin{aligned}
			&\left( |(A_m)_{ii}| + r_{i}^{S}(A_m) \right)
			\left( |(A_m)_{jj}| + r_{j}^{\overline{S}}(A_m) \right)\le \\ & \le \left( 2|(A_m)_{ij}| - r_{i}^{\overline{S}}(A_m) \right)
			\left( 2|(A_m)_{ji}| - r_{j}^{S}(A_m) \right),
		\end{aligned}
	\end{equation}
	where the strict inequality weakens to non-strict in the limit.
	Inequality \eqref{eq:3} is precisely the condition that $0$ belongs to the closure of $\Psi_{ij}^S(A_m)$. Hence, the proof is complete.
\end{proof}


\begin{example} \em \label{P18P19}
	
	Consider the $3 \times 3$ matrix polynomial
	\[
	P(\lambda) = 
	\left[
	\begin{array}{ccc}
		2 - 2i\lambda & (1+2i) + i\lambda + 4i\lambda^2 & 2 + \lambda + 2i\lambda^2 \\
		(1+i) + 5i\lambda - 12i\lambda^2 & 4i + 3i\lambda - 6i\lambda^2 & -5i - 4\lambda + 19i\lambda^2 \\
		(1-i) + i\lambda + 9i\lambda^2 & -i\lambda + 2i\lambda^2 & 2i + 6i\lambda^2
	\end{array}
	\right]
	\]
	
	Figures \ref{fig:p18_ckv} and \ref{fig:p18_ckvexcl} illustrate the unbounded CKV set of $P(\lambda)$ and the improvement achieved by the corresponding bounded CKV inclusion--exclusion set, whereas in Figures \ref{fig:p18_dz} and \ref{fig:p18_dzexcl} the DZ and DZ inclusion--exclusion sets remain unbounded.
	
\begin{figure}[!htbp]
	\centering
	
	\begin{minipage}[t]{0.43\textwidth}
		\centering
		\includegraphics[width=\linewidth]{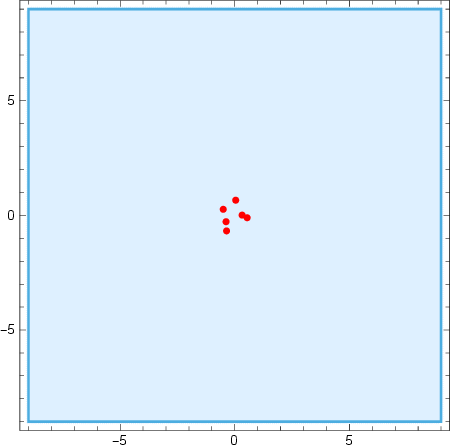}
		\captionof{figure}{CKV set of $P(\lambda)$.}
		\label{fig:p18_ckv}
	\end{minipage}
	\hspace{0.03\textwidth}
	\begin{minipage}[t]{0.43\textwidth}
		\centering
		\includegraphics[width=\linewidth]{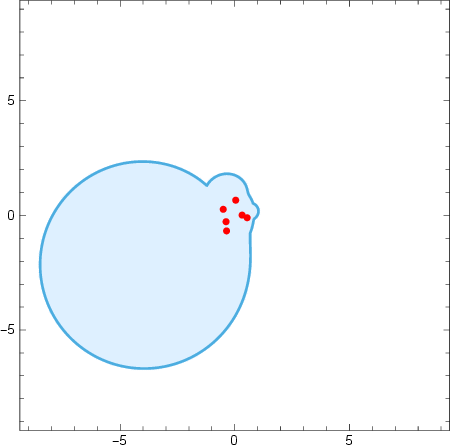}
		\captionof{figure}{CKV incl-excl set of $P(\lambda)$.}
		\label{fig:p18_ckvexcl}
	\end{minipage}
	
	\vspace{0.3em}
	
	\begin{minipage}[t]{0.43\textwidth}
		\centering
		\includegraphics[width=\linewidth]{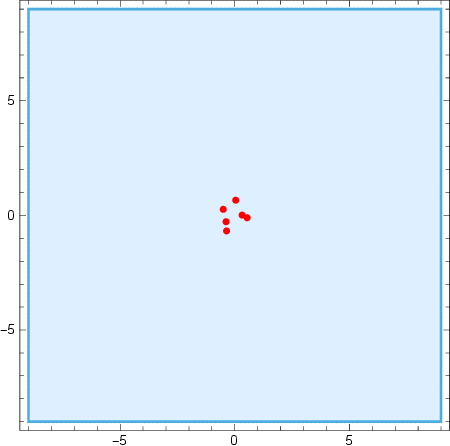}
		\captionof{figure}{DZ set of $P(\lambda)$.}
		\label{fig:p18_dz}
	\end{minipage}
	\hspace{0.03\textwidth}
	\begin{minipage}[t]{0.43\textwidth}
		\centering
		\includegraphics[width=\linewidth]{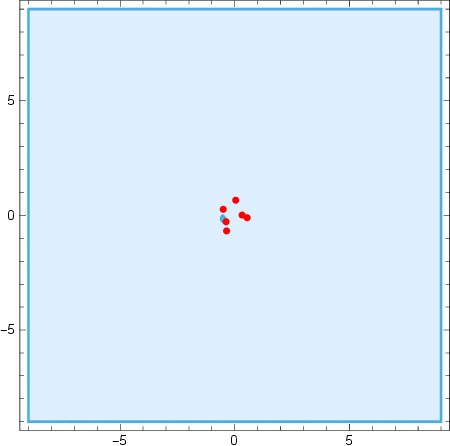}
		\captionof{figure}{DZ incl-excl set of $P(\lambda)$.}
		\label{fig:p18_dzexcl}
	\end{minipage}
	
\end{figure}
	
	Furthermore, for \(S=\{2,3\}\), \(i=2,\ j=1\), Figures~\ref{fig1},~\ref{fig2}, and~\ref{fig3} illustrate the sets \(W_{ij}^S(P(\lambda))\), \(\Psi_{ij}^S(P(\lambda))\), and \(\Xi_{ij}^S(P(\lambda))\), respectively, where \(W_{ij}^S(P(\lambda))\) and \(\Psi_{ij}^S(P(\lambda))\) are unbounded (as \(0\in W_{ij}^S(A_2)\) and \(0\in \Psi_{ij}^S(A_2)\)), while \(\Xi_{ij}^S(P(\lambda))\) is bounded because \(0\notin \Xi_{ij}^S(A_2)\), verifying the assertion of Proposition~\ref{prop:Psi_unbounded}.
	
	
\begin{figure}[!htbp]
	\centering
	
	\begin{minipage}[t]{0.30\textwidth}
		\centering
		\includegraphics[width=\linewidth]{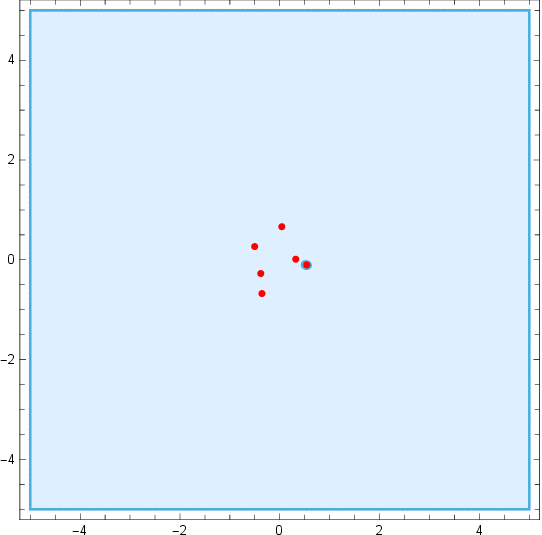}
		\captionof{figure}{$W_{21}^{\{2,3\}}(P(\lambda))$.}
		\label{fig1}
	\end{minipage}
	\hspace{0.02\textwidth}
	\begin{minipage}[t]{0.30\textwidth}
		\centering
		\includegraphics[width=\linewidth]{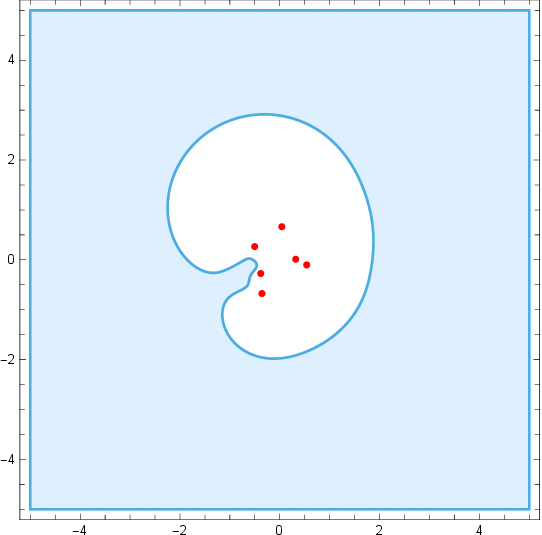}
		\captionof{figure}{$\Psi_{21}^{\{2,3\}}(P(\lambda))$.}
		\label{fig2}
	\end{minipage}
	\hspace{0.02\textwidth}
	\begin{minipage}[t]{0.30\textwidth}
		\centering
		\includegraphics[width=\linewidth]{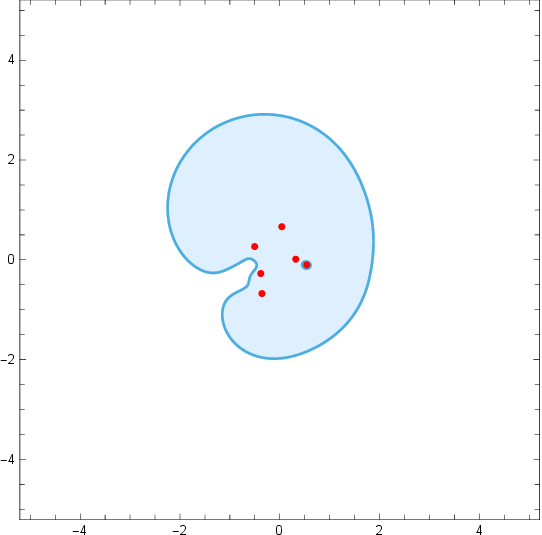}
		\captionof{figure}{$\Xi_{21}^{\{2,3\}}(P(\lambda))$.}
		\label{fig3}
	\end{minipage}
	
\end{figure}
	
\end{example}
\section{Conclusions}

In this paper, we introduced a new family of inclusion-exclusion eigenvalue localization sets based on the framework of $S-$strict diagonal dominance. By combining the ideas of CKV localization with exclusion regions, we obtained a unified construction that extends the classical inclusion-exclusion approach to arbitrary nonempty subsets of the index set.

The proposed framework naturally contains both the Ger\v sgorin and the Dashnic-Zusmanovich inclusion-exclusion localization sets as special cases. By intersecting the corresponding $S-$SDD inclusion-exclusion regions over all nonempty subsets $S$, we derived the CKV inclusion-exclusion localization set, which is, in general, sharper than the previously known localization sets. 

The theory was further extended to matrix polynomials by introducing the corresponding $S-$SDD inclusion and inclusion-exclusion localization sets. We proved that all finite and infinite eigenvalues of a matrix polynomial belong to the proposed localization regions and established several structural properties of the associated exclusion sets.

Finally, the presented numerical examples demonstrate that the proposed approach can substantially reduce the size of the localization region compared with existing ones. We believe that the flexibility of the $S-$SDD framework makes it a promising basis for further developments, including refined localization techniques, computational algorithms, and applications to structured matrices and polynomial eigenvalue problems.


\section*{Acknowledgements}
The work of the third author has been supported by the Ministry of Science, Technological Development and Innovation (Contract No. $451-03-34/2026-03/200156$) and the Faculty of Technical Sciences, University of Novi Sad through project ``Scientific and Artistic Research Work of Researchers in Teaching and Associate Positions at the Faculty of Technical Sciences, University of Novi Sad 2026'' (No. $01-3609/1$).


\end{document}